\newtheorem{alg}[theorem]{Algorithm}
\newtheorem{defn}[theorem]{Definition}
\newtheorem{remark}{Remark}
\newtheorem{acknowledgement}{Acknowledgment}
\newcounter{minutes}\setcounter{minutes}{\time}
\newcounter{hours}\setcounter{hours}{\time}
\newcommand{\C}{\mathbb{C}} 
\newcommand{\symD}{\Omega} 
\newcommand{\g}{\gamma} 
\newcommand{\symQuad}{Q}
\newcommand{\symQuadC}{\tilde{Q}}
\newcommand{\symM}{\mathrm{M}}
\begin{document}

\title{Conjugate Function Method and Conformal \\ Mappings in Multiply Connected Domains}

\author{Harri Hakula\thanks{Aalto University, Institute of Mathematics,
P.O. Box 11100, FI-00076 Aalto,
FINLAND ({\tt harri.hakula@aalto.fi})} \and
Tri Quach\thanks{Aalto University, Institute of Mathematics,
P.O. Box 11100, FI-00076 Aalto,
FINLAND ({\tt tri.quach@aalto.fi})} \and
Antti Rasila\thanks{Aalto University, Institute of Mathematics,
P.O. Box 11100, FI-00076 Aalto,
FINLAND ({\tt antti.rasila@iki.fi})}}

\maketitle

\begin{abstract}
The conjugate function method is an algorithm for numerical 
computation of conformal mappings for simply and doubly connected domains.
In this paper the conjugate function method is generalized for multiply connected domains.
The key challenge addressed here is the construction of the conjugate domain 
and the associated conjugate problem.
All variants of the method preserve the so-called reciprocal relation of the moduli.
An implementation of the algorithm, along with several examples and illustrations are given.
\end{abstract}



\begin{keywords}
numerical conformal mappings, conformal modulus, multiply connected domains, canonical domains

\end{keywords}


\section{Introduction}
\noindent Conformal mappings play an important role in both theoretical 
complex analysis and in certain engineering applications, such as 
electrostatics, aerodynamics, and fluid mechanics. 
Existence of conformal mappings of simply connected 
domains onto the upper-half plane or the unit disk follows from 
the Riemann mapping theorem, and there are generalizations of this result 
for doubly and multiply connected domains \cite{ahl2}. However, constructing 
such mappings analytically is usually very difficult, and 
numerical methods are required.

There exists an extensive literature on numerical construction of conformal mappings 
for simply and doubly connected domains \cite{ps}. 
One popular method is based on the Schwarz-Christoffel formula \cite{dt}, and its implementation 
SC Toolbox is due to Driscoll \cite{driscoll96,dri}. 
SC Toolbox itself is based on earlier FORTRAN package by Trefethen \cite{trefethen}.
A new algorithm involving a finite element method and the 
harmonic conjugate function was presented by the authors in \cite{hqr}. 

While the study of numerical conformal mappings in multiply connected domains dates back to 1980's \cite{mayo,reichel}, recently
there has been significant interest towards the subject. DeLillo, Elcrat and Pfaltzgraff \cite{dep} were the first to give 
a Schwarz-Christoffel formula for unbounded multiply 
connected domains. Their method relies on the Schwarzian reflection principle. 
Crowdy \cite{crowdy1} was the first to derive a Schwarz-Christoffel formula for 
bounded multiply connected domains, which was based on the use of Schottky-Klein 
prime function. 
In a very recent paper \cite{sete} conformal maps from multiply connected domains onto lemniscatic domains have been discussed.
The natural extension of this result to unbounded multiply connected domains 
is given in \cite{crowdy2}. It should be noted that a MATLAB implementation 
of the Schottky-Klein prime function is freely available \cite{crowdy-green}, 
and the algorithm is described in \cite{crowdy-marshall}. A method involving 
the harmonic conjugate function is given in \cite{ldgy}, but the approach there 
differs from ours. 

\begin{figure}
  \centering
  \subfloat[$R$-type; Dirichlet on all boundaries.]{\includegraphics[width=0.45\textwidth]{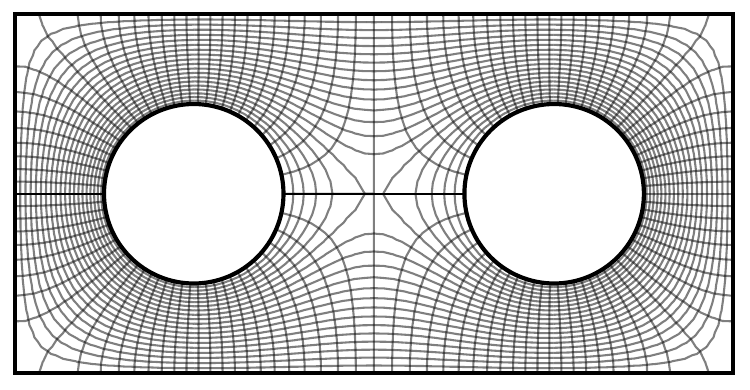}}\quad
  \subfloat[$Q$-type; Dirichlet on the left and right edges.]{\includegraphics[width=0.45\textwidth]{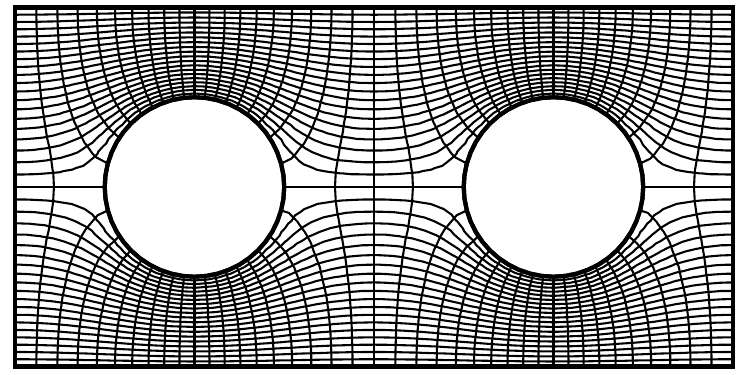}}
  \caption{Two Circles in Rectangle: Map.}\label{fig:introduction}
\end{figure}

The foundation of conjugate function methods for simply and doubly 
connected domains lies on properties of the (conformal) modulus, which
originates from the theory of quasiconformal mappings
\cite{ahlfors,lv,ps}. Here we extend the methods to multiply connected domains. In terms of partial differential equations, one has
to solve the Laplace equation $\Delta u = 0$, in $ \Omega$, 
with boundary conditions
\begin{equation}
		\mathds{1}_N\frac{\partial u}{\partial n} + \mathds{1}_D u = f(x,y), \text{ on } \partial\Omega,
\end{equation}
where the indicator functions refer to Neumann and Dirichlet boundary 
parts, respectively. Two configurations are of special interest: 
first, if only Dirichlet boundary conditions are set, e.g., 0 on the outer boundary,
and 1 on the interior boundary components, the problem is ring-like, and second, if only
two non-adjacent boundary segments have Dirichlet boundary conditions, the problem is 
quadrilateral-like; the configurations are referred to as types of $R$ and 
$Q$, respectively (See Figure~\ref{fig:introduction}).
In both cases the canonical domains are slit domains, first catalogued by 
Koebe \cite{Koebe}.

The main result of this paper is the generalization of the 
of the algorithm for simply and doubly connected domains described in
\cite{hqr} to multiply connected ones with different boundary conditions.
To our knowledge this is the first method for problems of type $R$.
More specifically, the fundamental new result
is the definition of the conjugate problem for multiply connected domains.
We show formally for type $R$ (Proposition \ref{prop: reci}) that this 
choice for the conjugate problem preserves the
important reciprocal relation \cite{lv} for the moduli 
$\symM(\symD)$ and $\symM(\tilde{\symD})$
of the original and the conjugate problem, respectively:
\[
	\symM(\symD) \symM(\tilde{\symD})=1.
\]
Similar result holds for type $Q$.

Our method is suitable for a very general class of
domains, allowing curved boundaries and even cusps. The
implementation of the algorithm is based on the $hp$-FEM described in
\cite{hrv1}, and in \cite{hrv2} it is generalized to cover unbounded
domains. In \cite{hrv3}, the method has been used to compute moduli
of domains with strong singularities.

The performance of the method has been evaluated by solving four
benchmark problems, two on computing resistances \cite{dek, trefethen2},
and two on capacities \cite{bsv}.
In each case the results agree with those obtained either with special-purpose methods or adaptive $h$-FEM.

In general, conformal mapping of multiply connected domains
is possible only if the the domain is a conformal image of a Denjoy-domain, i.e., 
a domain complement of which is a subset of the real line. 
It is well-known that this property holds for any $n$ times connected domain if $n$ is $1$, $2$, or $3$. 
In the method presented here, for instance in cases of type $R$ the saddle points of the potential function
of the original multiply connected problem are special, and it may be that the mapping is not conformal
exactly at the saddle point if the domain is not a Denjoy-domain. 
Thus, our method is conformal up to a finite set of points (\textit{exceptional points}). 

The rest of the paper is organized as follows: 
In Section~\ref{sec:preliminaries} the necessary concepts from function theory are introduced.
The new algorithms for multiply connected domains is described in Sections~\ref{sec:cfmmcd}
and \ref{sec:cfmmcdQ}, for types of $R$ and $Q$, respectively.
After the numerical implementation is briefly discussed, an extensive set of numerical experiments
is analyzed. As the final example of the paper we show how
canonical domains can be used for tracking evolving solutions, e.g., stress
fields, as the computational domain is perturbed.

\section{Preliminaries} \label{sec:preliminaries}

In this section we introduce concepts from function
theory and review the algorithm for simply or doubly connected domains.
For details and references we refer to \cite{hqr}.

\begin{defn} (Modulus of a Quadrilateral) \\
A Jordan domain $\symD$ in $\C$ with marked (positively ordered) points 
$z_1,z_2,z_3,z_4\in \partial \symD$ is called a {\it quadrilateral}, and denoted 
by $\symQuad = (\symD;z_1,z_2,z_3,z_4)$. Then there is a canonical conformal map of the quadrilateral $\symQuad$ onto a rectangle $R_d = (\symD';1+id,id,0,1)$, with the vertices 
corresponding, where the quantity $d$ defines the  {\it modulus of a quadrilateral}
$\symQuad$. We write
\[
\symM(\symQuad) = d.
\]
\end{defn}
Notice that the modulus $d$ is unique.
\begin{lemma} (Reciprocal Identity) \\
The following reciprocal
identity holds:
\begin{equation} \label{eqn: recip}
\symM(\symQuad)\, \symM(\symQuadC) =1,
\end{equation}
where $\symQuadC= (\symD; z_2,z_3,z_4, z_1)$ is called the
 {\it conjugate quadrilateral} of $\symQuad$.
\end{lemma}

\subsection{Dirichlet-Neumann Problem}
\label{sec:dnprob}
It is well known that one can express the modulus of a quadrilateral $Q$
in terms of the solution of the Dirichlet-Neumann mixed boundary value
problem.

Let $\symD$ be a domain in the complex plane whose boundary $\partial
\symD$ consists of a finite number of piecewise regular Jordan curves, so that at
every point, except possibly at finitely many points of the boundary, an exterior
normal is defined.  Let $\partial \symD =A \cup B$ where $A, B$ both are
unions of regular Jordan arcs such that $A \cap B$ is finite. Let
$\psi_A$, $\psi_B$ be real-valued continuous functions defined on $A,
B$, respectively. Find a function $u$ satisfying the following
conditions:
\begin{enumerate}
\item $u$ is continuous and differentiable in $\overline{\symD}$.
\item $u(t) = \psi_A(t),\qquad \textrm{for all } \, t \in A$.
\item If $\partial/\partial n$ denotes differentiation in
the direction of the exterior normal, then
\[
\frac{\partial}{\partial n} u(t)=\psi_B(t),\qquad \textrm{for all } \, t \in  B.
\]
\end{enumerate}
The problem associated with the conjugate quadrilateral $\symQuadC$ is called the {\it conjugate Dirichlet-Neumann problem}.

Let $\gamma_j, j=1,2,3,4$ be the arcs of $\partial \symD$ between $(z_1,
z_2)\,,$ $(z_2, z_3)\,,$ $(z_3, z_4)\,,$ $(z_4, z_1),$ respectively.
Suppose that $u$ is the (unique) harmonic solution of the
Dirichlet-Neumann problem with mixed boundary values of $u$ equal to $0$
on $\gamma_2$, equal to $1$ on $\gamma_4$, and $\partial u/\partial n =
0$ on $\gamma_1, \gamma_3$. Then:
\begin{equation} \label{qmod}
\symM(\symQuad)= \iint_\symD |\nabla  u|^2\,dx \, dy.
\end{equation}

Suppose that $\symQuad$ is a quadrilateral, and $u$ is the harmonic
solution of the Dirichlet-Neumann problem and let $v$ be a conjugate
harmonic function of $u$,
$v(\textrm{Re}\, z_3, \textrm{Im}\,z_3)~=~0$. 
Then $f = u + iv$ is an analytic function, and it maps
$\symD$ onto a rectangle $R_h$ such that the image of the points
$z_1,z_2,z_3,z_4$ are $1+id, id,0,1$, respectively. Furthermore by
Carath\'{e}odory's theorem, $f$ has a
continuous boundary extension which maps the boundary curves $\g_1,
\g_2, \g_3, \g_4$ onto the line segments $\g_1', \g_2', \g_3', \g_4'$.
\begin{lemma} \label{lemma: conj-hqr}
Let $\symQuad$ be a quadrilateral with modulus $d$, and let $u$ be the
harmonic solution of the Dirichlet-Neumann problem. Suppose that $v$ is
the harmonic conjugate function of $u$, with $v({\rm Re}\, z_3, {\rm
Im}\, z_3) = 0$. If  $\tilde{u}$ is the harmonic solution of the
Dirichlet-Neumann problem associated with the conjugate quadrilateral
$\symQuadC$, then $v = d\tilde{u}$.
\end{lemma}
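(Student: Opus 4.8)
The plan is to show that the normalized conjugate function $v/d$ is itself the (unique) solution of the conjugate Dirichlet--Neumann problem, so that $v/d = \tilde u$ follows immediately by uniqueness. Everything rests on two observations: forming the harmonic conjugate interchanges the Dirichlet and Neumann parts of $\partial\symD$, and passing from $\symQuad$ to $\symQuadC = (\symD; z_2, z_3, z_4, z_1)$ cyclically permutes the four arcs so that exactly these interchanged data reappear.

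First I would record, via the Cauchy--Riemann equations for $f = u + iv$, how the boundary conditions transform. Writing $\tau$ and $n$ for the unit tangent and exterior normal along $\partial\symD$, the relations $u_x = v_y$ and $u_y = -v_x$ yield (up to the orientation convention relating $\tau$ and $n$) the identities $\partial v/\partial n = \partial u/\partial \tau$ and $\partial v/\partial \tau = -\partial u/\partial n$. On the Dirichlet arcs $\gamma_2, \gamma_4$ of the original problem $u$ is constant, so $\partial u/\partial \tau = 0$ and hence $\partial v/\partial n = 0$ there. On the Neumann arcs $\gamma_1, \gamma_3$ we have $\partial u/\partial n = 0$, so $\partial v/\partial \tau = 0$ and $v$ is constant along each of $\gamma_1$ and $\gamma_3$. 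Thus $v$ carries Neumann data on $\gamma_2, \gamma_4$ and constant (Dirichlet) data on $\gamma_1, \gamma_3$ --- precisely the boundary structure of the Dirichlet--Neumann problem associated with $\symQuadC$, whose arcs in order are $\gamma_2, \gamma_3, \gamma_4, \gamma_1$.

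Next I would pin down the two constants. Since $\gamma_3$ joins $z_3$ and $z_4$ and $v$ is constant on it, the normalization $v(\mathrm{Re}\,z_3, \mathrm{Im}\,z_3) = 0$ forces $v \equiv 0$ on $\gamma_3$. For $\gamma_1$ I would invoke the mapping properties established in the paragraph preceding the lemma: $f$ maps $\symD$ conformally onto the image rectangle with $z_1, z_2 \mapsto 1+id, id$, so $v = \mathrm{Im}\,f \equiv d$ on $\gamma_1$. Consequently $v/d$ is harmonic in $\symD$, equals $0$ on $\gamma_3$, equals $1$ on $\gamma_1$, and has vanishing normal derivative on $\gamma_2, \gamma_4$. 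This is exactly the Dirichlet--Neumann problem defining $\tilde u$, so by uniqueness $v/d = \tilde u$, that is, $v = d\tilde u$.

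The hard part will be the sign- and orientation-bookkeeping in the two steps above: ensuring the Cauchy--Riemann transfer really sends the Dirichlet condition for $u$ to the Neumann condition for $v$ and vice versa, and confirming that the constant value of $v$ on $\gamma_1$ is the modulus $d$ itself rather than some unrelated constant --- this is the only point at which the height of the rectangle, and hence $\symM(\symQuad) = d$, enters the argument. I would also note that a globally single-valued harmonic conjugate $v$ exists because $\symD$, being a Jordan (hence simply connected) domain, has no nontrivial periods to obstruct conjugation.
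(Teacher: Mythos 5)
Your proposal is correct and follows essentially the same route as the paper: while the paper itself defers the proof of this lemma to \cite{hqr}, its own proof of the multiply connected analogue (Lemma \ref{lemma: conj-hqr2}) uses exactly your strategy --- the Cauchy--Riemann equations to interchange Dirichlet and Neumann data, identification of the constant values of $v$ on the former Neumann arcs, and the uniqueness theorem for harmonic functions to conclude $v = d\tilde{u}$. Your handling of the orientation conventions, the normalization forcing $v \equiv 0$ on $\gamma_3$, and the use of the rectangle mapping property to get $v \equiv d$ on $\gamma_1$ are all sound, so there is no gap.
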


\subsection{Ring Domains}
Let $E_0$ and $E_1$ be two disjoint and connected compact sets in the extended 
complex plane ${\C_\infty} = \C \cup \{\infty\}$. Then one of the set $E_0$ or $E_1$ is bounded and
without loss of generality we may assume that it is $E_0$. Then a set
$R={\C_\infty} \backslash (E_0 \cup E_1)$ is connected and is called a
{\it ring domain}. The {\it capacity} of $R$ is defined by
\[
\textrm{cap} R = \inf_u \iint_R |\nabla u|^2 \, dx \, dy,
\]
where the infimum is taken over all non-negative, piecewise
differentiable functions $u$ with compact support in $R\cup E_0$ such
that $u=1$ on $E_0$. Suppose that a function $u$ is defined on $R$ with
$1$ on $E_0$ and $0$ on $E_1$. Then if $u$ is harmonic, it is unique and
it minimizes the integral above. The conformal modulus of a ring domain
$R$ is defined by $\symM(R) = 2\pi / \textrm{cap} R$. The ring domain
$R$ can be mapped conformally onto the annulus $A_r$, where $r =
\symM(R)$.
\begin{figure}
\centering
\subfloat[Ring domain with boundary conditions. $\Gamma$ is one of the contours, i.e., equipotential curves of the solution $u_1$.]{\includegraphics[width=0.3\textwidth]{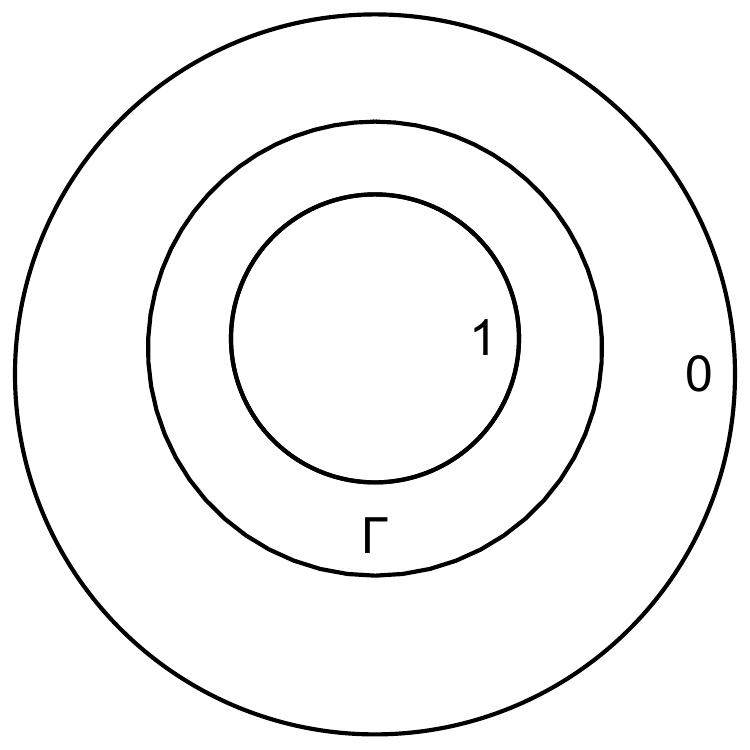}}\quad
\subfloat[Conjugate domain $\tilde{\Omega}$ with boundary conditions. Here the Dirichlet boundary conditions are taken to be 0 and $d = \int_{\Gamma} |\nabla u_1| ds$.]{\includegraphics[width=0.3\textwidth]{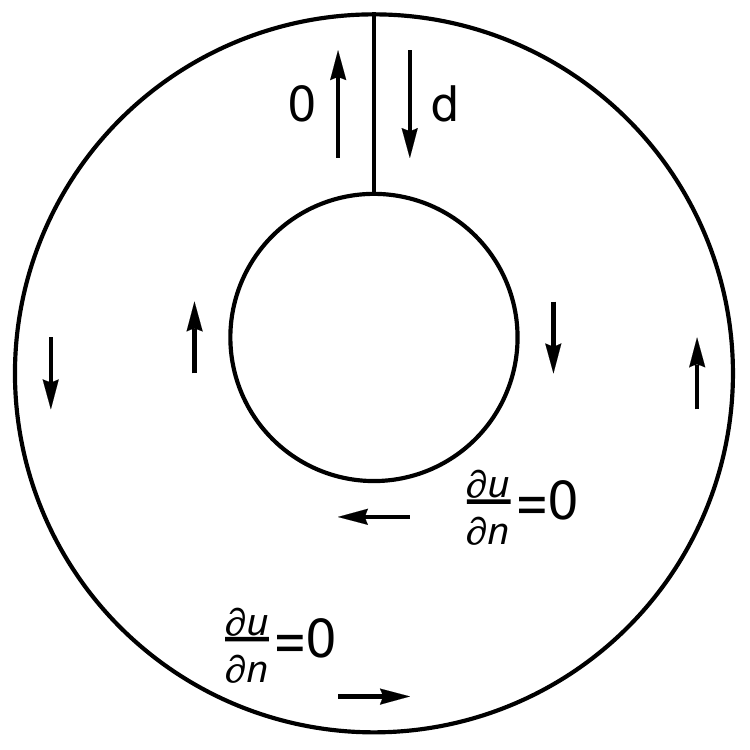}}\quad
\subfloat[Conformal map: Contour lines of $u_1$ and $u_2$.]{\includegraphics[width=0.3\textwidth]{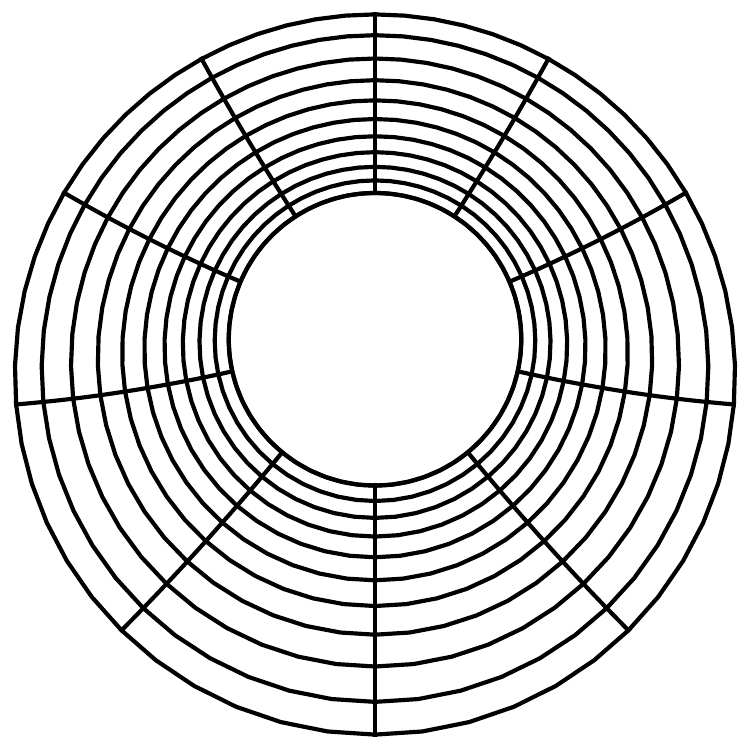}}
\caption{Introduction to the conjugate function method for ring domains.}\label{fig:fig}
\end{figure}

\subsection{Conjugate Function Method} \label{sec:cfm}
For simply connected domains the conjugate function method can be defined
in three steps.
\begin{alg}(Conjugate Function Method) \label{alg: hqr}
\begin{enumerate}
\item Solve the Dirichlet-Neumann problem to obtain $u_1$ and compute the modulus $d$.
\item Solve the Dirichlet-Neumann problem associated with $\symQuadC$ to obtain $u_2$.
\item Then $f = u_1 + idu_2$ is the conformal mapping from $\symQuad$ onto $R_d$ such that 
  the vertices $(z_1,z_2,z_3,z_4)$ are mapped onto the corners $(1+id,id,0,1)$.
\end{enumerate}
\end{alg}
For ring domains the algorithm has to be modified, of course, and here the fundamental
step is the cutting of the domain along the path of steepest descent, which enables
us to return the problem to similar settings as for the simply connected case.
\begin{alg}(Conjugate Function Method for Ring Domains) \label{alg: hqrring}
\begin{enumerate}
\item Solve the Dirichlet problem to obtain the potential function
$u$ and the modulus $\symM(R)$.
\item  Cut the ring domain through the steepest descent curve which
is given by the gradient of the potential function $u_1$ and obtain a quadrilateral where the Neumann condition is on the
steepest descent curve and the Dirichlet boundaries remain as before. 
\item Use the method for simply connected domains (Algorithm \ref{alg: hqr}).
\end{enumerate}
\end{alg}
Notice that the choice of the steepest descent curve is not unique due
to the implicit orthogonality condition.
In Figure~\ref{fig:fig} an example of the ring domain case is given.
The key observation is that  $d = \int_{\Gamma} |\nabla u_1|\, ds$, 
where $\Gamma$ is any of the contour lines of the solution $u_2$.
In  Figure~\ref{fig:fig}b the Dirichlet boundary conditions are set to be 0 and $d$,
instead of usual choice of 0 and 1. This choice does not have any effect for Figure~\ref{fig:fig}c
but is of paramount interest in the generalization of the algorithm.

\begin{defn}[Cut]\label{def:cut}
  A cut $\gamma$ is a curve in the domain $\Omega$, which introduces
  two boundary segments denoted by $\gamma^+$ and $\gamma^-$ to the 
  conjugate domain $\tilde{\Omega}$. Along the oriented boundary
 $\partial\tilde{\Omega}$, the segments  $\gamma^+$ and $\gamma^-$ are traversed
 in opposite directions.
\end{defn}

For the sake of discussion below let us define the conjugate problem directly.
The cut $\gamma$ (Definition~\ref{def:cut}) has its end points on $\partial E_0$
and  $\partial E_1$. One choice for the (oriented) boundary of conjugate domain $\tilde{\Omega}$
starting from the end point of $\gamma$ on  $\partial E_1$ is given by the set 
$\{\gamma^+,\partial E_0,\gamma^-,\partial E_1 \}$ as shown in  Figure~\ref{fig:fig}b.
The boundary conditions are set as $u = 0$ on $\gamma^+$, $u = d$ on $\gamma^-$, and
$\partial u_2 / \partial n = 0$ on $\partial E_j$, $j=1,2$.
\subsection{Canonical Domains} \label{sec:canonical}
The so called canonical domains play a
crucial role in the theory of quasiconformal mappings (cf. \cite{lv}).
These domains have a simple geometric structure. Let us consider a
conformal mapping $f \colon \symD \to D$, where $D$ is a canonical
domain, and $\symD$ is the domain of interest. The choice of the
canonical domain depends on the connectivity of the domain $\symD$, and
both domains $D$ and $\symD$ have the same connectivity. It should be
noted that in simply and doubly connected cases, domains can be
mapped conformally onto each other if and only if their moduli agree.
In this sense,
moduli divide domains into conformal equivalence classes. For simply
connected domains, natural choices for canonical domains are the unit
disk, the upper half-plane and a rectangle. In the case of doubly
connected domains an annulus is used as the canonical domain. For
$m$-connected domains, $m>2$, we have $3m-6$ different moduli, which
leads to various choices of canonical domains. These domains have been
studied in \cite{grunsky,nehari}. The generalization of Riemann mapping
theorem onto multiply connected domains is based on these moduli, see
\cite[Theorems 3.9.12, 3.9.14]{grunsky}. 

\section{Conjugate Function Method for Multiply Connected Domains of Type $R$}\label{sec:cfmmcd}
Let us first formally define the multiply connected domains of type $R$ and their capacities.
\begin{figure}[!ht]
\begin{center}
\begin{tikzpicture}[scale=.8,>=stealth]

\filldraw[dashed,rounded corners=8pt,thin,fill=black!7!white] (0.6,-0.6) .. controls (0.6,1.6) .. (1.6,0.6) coordinate[near start] (z_1) .. controls (3,3.6) .. (2,3) .. controls (-1.4,3.4) and (-2,3) ..  (-1,-1.2) -- cycle;

\draw[dashed,thick,white] (z_1) -- ++(-.6,0) coordinate (z_2);
\draw[thick,color=orange] (z_1) -- (z_2) coordinate [midway] (zz1);
\draw (zz1) node [anchor=north] {$\gamma_0^+$};
\draw[thick, shorten >=0.1cm, shorten <=0.1cm, <-] (zz1) .. controls (.6,1.3)  .. ++(1.1,-0.4) node [anchor=west] {$\gamma_0^-$};

\filldraw[thick,white,fill=white] (z_2) arc (0:360:.45cm);

\draw[dashed,rounded corners=8pt,thick] (0.6,-0.6) .. controls (0.6,1.6) .. (1.6,0.6) .. controls (3,3.6) .. (2,3) .. controls (-1.4,3.4) and (-2,3) ..  (-1,-1.2) -- cycle;

\draw[thin] (z_2) arc (0:60:.45cm) coordinate (z_3);
\draw[dashed,thick,white] (z_3) -- ++(.3,.7) coordinate (z_4);
\draw[dashed,thick,color=red] (z_3) -- (z_4) coordinate [midway] (zz2);
\draw (zz2) node [anchor=east] {$\gamma_1^+$};
\draw[thick, shorten >=0.1cm, shorten <=0.1cm, <-] (zz2) .. controls (.9,1.50)  .. ++(1.3,0.5) node [anchor=west] {$\gamma_1^-$};

\draw[thick,color=cyan] (z_2) arc (0:360:.45cm);
\filldraw[thick,white,fill=white] (z_4) arc (-120:240:.55cm);
\draw[thick,dotted,color=blue] (z_4) arc (-120:240:.55cm);

\draw[->,thick,xshift=2.5cm,yshift=1cm] (0,0) -- (1,0) node [midway,anchor=south]{$\varphi$};
\draw[->,thick,xshift=7.7cm,yshift=1cm] (0,0) -- (1,0) node [midway,anchor=south]{scaling};
\draw[thin,xshift=7.7cm,yshift=1cm] (0,0) -- (1,0) node [midway,anchor=north]{exp $\circ$ rot};

\filldraw[white,thick,fill=black!7!white,xshift=5.5cm,yshift=1cm] (-1.5,-2.5) -- (1.5,-2.5) -- (1.5,2.5) -- (-1.5,2.5) -- cycle;

\draw[thick, dashed,xshift=5.5cm,yshift=1cm, color=red] (1.5,-1.2) coordinate (ww1) -- (0.8,-1.2) ;
\draw (ww1) node [anchor=south east] {$\varphi(\gamma_1^+)$};
\draw[thick, dashed,xshift=5.5cm,yshift=1cm, color=red] (1.5,1.2) coordinate (ww2) -- (0.8,1.2);
\draw (ww2) node [anchor=north east] {$\varphi(\gamma_1^-)$};
\draw[thick,dotted, xshift=5.5cm,yshift=1cm, color=blue] (1.5,-1.1823) -- (1.5,1.1823);
\draw[thick, xshift=5.5cm,yshift=1cm, color=cyan] (1.5,-2.5) -- (1.5,-1.1823) (1.5,1.1823) -- (1.5,2.5);

\draw[dashed,thick, xshift=5.5cm,yshift=1cm] (-1.5,2.5) -- (-1.5,-2.5);
\draw[thick, xshift=5.5cm,yshift=1cm, color=orange] (-1.5177,-2.5) -- (1.5177,-2.5) coordinate [midway] (ww3) (1.5177,2.5) -- (-1.5177,2.5) coordinate [midway] (ww4);
\draw (ww3) node [anchor=south] {$\varphi(\gamma_0^+)$};
\draw (ww4) node [anchor=north] {$\varphi(\gamma_0^-)$};

\filldraw[thick,shift={(11.7cm,1cm)},white,fill=black!7!white] (0,0) circle (2.3cm);
\filldraw[thick,shift={(11.7cm,1cm)},white,fill=white] (0,0) circle (.75cm);
\draw[thick,dashed,shift={(11.7cm,1cm)}] (.75,0.02) -- (2.3,0.02);

\draw[thick,dashed,shift={(11.7cm,1cm)},color=red] (1.25,-0.015) arc (0:-23:1.25);
\draw[thick,dashed,shift={(11.7cm,1cm)},color=red] (1.8,-0.015) arc (0:-23:1.8);
\draw[thick,dotted,shift={(11.7cm,1cm)},color=blue] (1.2675,-.015) -- (1.7825,-.015);
\draw[thick,shift={(11.7cm,1cm)},color=cyan] (.75,-.015) -- (1.2675,-0.015);
\draw[thick,shift={(11.7cm,1cm)},color=cyan] (1.7825,-.015) -- (2.3,-.015);

\draw[thick,shift={(11.7cm,1cm)},color=orange] (0,0) circle (2.3cm);
\draw[thick,shift={(11.7cm,1cm)}, color=orange] (0,0) circle (.75cm);

\end{tikzpicture}
\caption{Construction of the conformal mapping from the domain of interest onto a canonical domain. In the first part, we use Algorithm \ref{alg: hqrmcd}, which creates the orange cut $\gamma_0$ and the dashed red cut $\gamma_1$. In the algorithm these cuts are traversed twice, which lead to two separated line-segments $\varphi(\gamma_k^+)$ and $\varphi(\gamma_k^-)$ on the rectangle. The latter part consist of a rotation, a scaling, and finally mapping with the exponential function.} \label{fig: construction}
\end{center}
\end{figure}
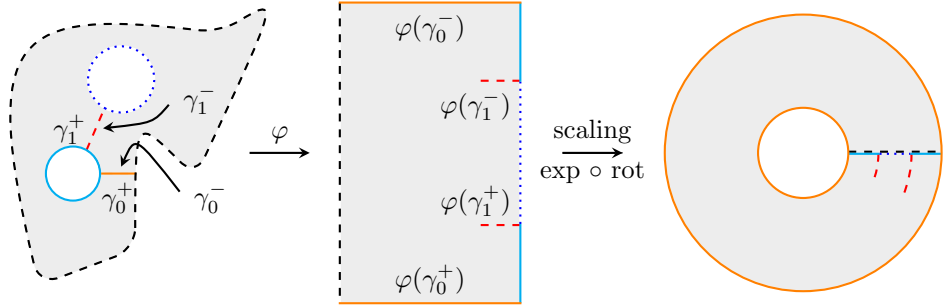
Let $m>2$ and $E_0, E_1, \ldots, E_m$ be disjoint and nondegenerate continua in the extended 
complex plane ${\C_\infty} = \C \cup \{\infty\}$. Suppose that $E_j$, $j = 1, 2, \ldots, m$ are bounded, then a set $\symD_{m+1} = \C_\infty \backslash \bigcup_{j=0}^{m} E_j$ is $(m+1)$-connected domain, and its (conformal) capacity is defined by
\[
\textrm{cap } \symD_{m+1} = \inf_{u} \iint_{\symD_{m+1}} |\nabla u|^2 \, dx \, dy,
\]
where the infimum is taken over all non-negative, piecewise
differentiable functions $u$ with compact support in
$\bigcup_{j=1}^{m} E_j \cup \symD_{m+1}$ such that $u=1$ on $E_j$, $j=
1, 2, \ldots, m$. Suppose that a function $u$ is defined on
$\symD_{m+1}$ with $1$ on $E_j$, $j=1,2, \ldots, m$ and $0$ on $E_0$.
Then if $u$ is harmonic, it is unique, and it minimizes the
integral above. The modulus of $\symD_{m+1}$ is defined by
$\symM(\symD_{m+1}) = 2\pi / \textrm{cap } \symD_{m+1}$.
If the degree of connectivity does not play an important role, the subscript will be omitted and we simply write $\symD$.

In contrast with the ring problem there is no immediate way to define a conjugate problem.
Indeed, it is clear that the conjugate domain \textit{cannot} be a quadrilateral in the sense of
definitions above. However, there exists a contour line $\Gamma_0$ such that it encloses the
set $E_j,\ j=1,2,\ldots$, and 
\begin{equation} \label{eqn: constant-d}
d = \symM(\symD_{m+1}) =  \int_{\Gamma_0} |\nabla u| \, ds.
\end{equation}

Thus, there is an analogue for the cutting of the domain along the curve of steepest descent.
It can be assumed without 
any loss generality, that the cut $\gamma_0$ (and the Dirichlet conditions) is between $E_0$ and $E_1$.
Then the immediate question is how to cut the domain further between $E_j,\ j=1,2,\ldots, m$, in 
such a fashion that the conjugate domain is simply connected, and set the boundary conditions
so that the Cauchy-Riemann equations are satisfied? 

There is one additional property of 
the solution $u$ that we can utilize. Namely, for every  $E_j,\ j=1,2,\ldots$, there exists an
enclosing contour line $\Gamma_j$.
The capacity has a natural decomposition
\begin{equation} \label{eq:decomposition}
  d = \sum_j \hat{d}_j,\quad \hat{d}_j =  \int_{\Gamma_j} |\nabla u|\, ds = \sum_k d_k = \sum_k\int_{\Gamma_{j,k}} |\nabla u|\, ds, 
\end{equation}
where $\Gamma_{j,k}$ denotes a segment from discretization of the contour line $\Gamma_j = \cup_k \Gamma_{j,k}$.

\subsection{Saddle Points} The saddle points of the solution $u$ are of special interest.
Notice that for simply and doubly connected domains they do not exist, thus any generalization of the Algorithm~\ref{alg: hqr}
must address them specifically. First, there are two steepest-descent curves emanating from every saddle point.
This means that in the conformal mapping of the domain slits will emerge since the potential at the saddle
point must be less than 1. Second, analogously there are two steepest/ascent curves reaching some boundary points
$z_i$, $z_j$, at boundaries $\partial E_i$, $\partial E_j$, respectively. In addition, we say that $E_i$ and $E_j$ are conformally visible to each other.

\begin{remark}
  For symmetric configurations there may be more than two steepest-descent and steepest-ascent curves
  at the saddle point.
\end{remark}
\subsection{Cutting Process} The orthogonality requirement implies that the curve formed by joining
two curves of steepest descent from $E_i$ and $E_j$ meeting at the saddle point must
be a contour line of the conjugate solution, that is, an equipotential curve.
It follows that as in the doubly connected case, both boundary segments induced by a cut
have a different Dirichlet condition.
Therefore the cutting process can be outlined as follows:
\begin{alg}(Cutting Process) \label{alg:cut}
\begin{enumerate}
\item Identify the saddle points $s_k, \ k=1,2,\ldots$.
\item Join the two curves of steepest descent from $\partial E_i$ and $\partial E_j$ meeting at the point $s_k$ into
  cut $\gamma_m,\ m \geq 1$.
\item Starting from the first cut, form an oriented boundary of a simply connected domain by alternately 
  traversing cuts $\gamma_m$ and segments of $\partial E_j$ induced by the cuts. Once the boundary is completed, every 
  cut has been traversed twice (in opposite directions) and every $\partial E_j$ has been traversed once.
\end{enumerate}
\end{alg}
In Figure~\ref{fig: cutting-jump} two configurations are shown.
\begin{remark}
  The symmetric case is covered if we allow for overlapping or partially overlapping cuts.
\end{remark}  
\subsection{Dirichlet Conditions Over Cuts}
Once the domain $\Omega$ has been cut and the oriented boundary of the conjugate domain $\tilde{\Omega}$
has been set up it remains to set the Dirichlet conditions over the cuts. Given that the first
cut leads to boundary conditions of 0 and $d$, it is sufficient to simply trace the oriented
boundary of $\tilde{\Omega}$ and maintain the cumulative sum of jumps in modules computed over
the segments $\Gamma_{j,k}$ connecting two consecutive cuts. Referring to Figure~\ref{fig: cutting-jump}
notice that the identity (\ref{eq:decomposition}) hold over the segments $\Gamma_{j,k}$.
\begin{alg}(Dirichlet Conditions Over Cuts) \label{alg:jumps}
\begin{enumerate}
\item Set the Dirichlet boundary conditions of the boundary conditions induced by the first cut to 0 and $d$.
\item Trace the boundary starting from the zero boundary and update the cumulative sum of
  \[
  d_m =  \int_{\Gamma_{j,k}} |\nabla u| \, ds,
\]
where the $\Gamma_{j,k}$ are included in the order given by the boundary orientation.
\item At every cut set the Dirichlet condition to the cumulative sum reached at that point.
\end{enumerate}
\end{alg}

\subsection{Reciprocal Identity}

Suppose that $u_1$ is the (unique) harmonic solution of the Dirichlet-Neumann problem given in the beginning of Section \ref{sec:cfmmcd}.
Let $u_2$ be a conjugate harmonic function of $u_1$ such that $u_2(\textrm{Re}\, \tilde{z}, \textrm{Im}\, \tilde{z}) = 0$, where $\tilde{z}$ is the intersection point of $E_0$ and $\gamma_0^+$.

Then $\varphi = u_1 + iu_2$ is an analytic function, and it maps $\symD$ onto a rectangle $R_d = \{ z\in \C : 0< \mathrm{Re}\,z <1, \, 0< \mathrm{Im}\,z <d \}$ minus $n-2$ line-segments, parallel to real axis, between points $(u_1(\tilde{z}_j),d_j)$ and $(1,d_j)$, where $\tilde{z}_j$ is the saddle point of the corresponding $j$th jump. In the process we have total of $n$ jumps. See Figure \ref{fig: construction} for an illustration of a triply connected example.


Let $u_2$ be the harmonic solution satisfying following boundary values $u_2$ equal to $0$ on $\gamma_0^+$ and equal to $1$ on $\gamma_0^-$, Neumann conditions $\partial u_2/\partial n = 0$  on $\partial E_j$, $j=0,1,\ldots, m$. For the cutting curves $\gamma_j$, $j=1,2,\ldots, m$, we have Dirichlet condition and the value is the cumulative sum $\sum_{j=0}^m d_j $.
On the $n$th jump, we have on the corresponding cutting curve $\gamma_j$
\[
u_2= \frac{\sum_{j=0}^n d_j}{d}, 
\]
where $d_j$ are given by \eqref{eq:decomposition}. 
Note that, if $\Gamma_0$ is an equipotential curve from $\gamma_0^+$ to $\gamma_0^-$, then we have 
\[
\symM(\tilde{\symD}) = \int_{\Gamma_0} |\nabla u_2| \, ds = \frac{1}{d}.
\]
Thus we have a following proposition, which has the same nature as the reciprocal identity given in \cite{hrv1}.
\begin{proposition}[Reciprocal identity] \label{prop: reci}
Suppose $u_1$ and $u_2$ are the solutions to problems on $\symD$ and $\tilde{\symD}$, respectively. If $\symM(\symD)$ denotes the integral of the absolute value of gradient of $u_2$ over the equipotential curve from $\gamma_0^+$ to $\gamma_0^-$, and $\symM(\tilde{\symD})$ denotes the same integral for $u_2$, then we have a normalized reciprocal identity
\begin{equation} \label{eqn: reci}
\symM(\symD) \symM(\tilde{\symD}) = 1,
\end{equation}
\end{proposition}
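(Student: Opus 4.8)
My plan is to reduce the identity to the pointwise Cauchy--Riemann relation between $u_1$ and its conjugate, exactly as Lemma~\ref{lemma: conj-hqr} does in the quadrilateral case, and then evaluate the two moduli as fluxes through conjugate families of curves. The single analytic input I need is that, by the construction of Section~\ref{sec:cfmmcd}, $\varphi=u_1+iu_2$ is analytic on $\symD$ off the finite set of extraordinary points, so that $d\,u_2$ is a single-valued branch of the harmonic conjugate $v$ of $u_1$ on the cut domain $\tilde{\symD}$. The Cauchy--Riemann equations then yield, at every non-extraordinary point, the pointwise relation $|\nabla u_2|=\tfrac1d|\nabla u_1|$ together with the orthogonality of the level curves of $u_1$ and $u_2$.

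With this in hand the computation is short. Reading the two moduli as fluxes, I take $\Gamma_0$ to be an equipotential of $u_1$ surrounding $E_1,\dots,E_m$, so that $\symM(\symD)=\int_{\Gamma_0}|\nabla u_1|\,ds=d$ is precisely \eqref{eqn: constant-d}. For $\symM(\tilde{\symD})$ I integrate $|\nabla u_2|$ over an equipotential $\Gamma'$ of $u_2$; by orthogonality $\Gamma'$ is a steepest-descent line of $u_1$ running from $E_0$ to the inner boundary, along which $\nabla u_1$ is tangential, so $|\nabla u_1|\,ds=|du_1|$ and the integral collapses to the total potential drop $1$. Hence
\[
\symM(\tilde{\symD})=\int_{\Gamma'}|\nabla u_2|\,ds=\frac1d\int_{\Gamma'}|\nabla u_1|\,ds=\frac1d,
\]
and $\symM(\symD)\cdot\symM(\tilde{\symD})=d\cdot\tfrac1d=1$. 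Equivalently, one reads this off the image of $\varphi$, a rectangle of width $1$ and height $d$ with horizontal slits, for which the reciprocal relation is the elementary $(d/1)(1/d)=1$.

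The step I expect to be the real obstacle is justifying that $d\,u_2$ genuinely is the harmonic conjugate carrying the prescribed boundary data --- that the Dirichlet values placed on the cuts by Algorithm~\ref{alg:jumps} agree with the level values of $v$. This is where the flux decomposition \eqref{eq:decomposition} is essential: the cumulative sums $\sum_{i\le j}d_i$ must be exactly the increments of $v$ accrued along the segments $\Gamma_{j,k}$ between consecutive cuts, so that $v$ is continuous across each equipotential, single-valued on $\tilde{\symD}$, and has total period $d$ about $\Gamma_0$. I would check in particular that each slit created at a saddle point is simultaneously a field line of $u_1$ (so it carries the natural Neumann condition for the $u_1$-problem and is invisible to it) and an equipotential of $u_2$ (so it carries the constant Dirichlet value $d_j/d$ required by the conjugate problem); this twofold compatibility is what preserves the two moduli at $d$ and $1/d$ despite the slits.

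Finally I would dispose of the extraordinary points: the Cauchy--Riemann relation may fail precisely at the saddle points, but these form a finite set of zero arclength, so they do not affect any of the above curve integrals. Choosing $\Gamma_0$ and $\Gamma'$ to avoid them (or taking a limit over nearby equipotentials) makes $|\nabla u_2|=|\nabla u_1|/d$ valid along the entire contour of integration, which completes the proof.
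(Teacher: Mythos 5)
Your proof is correct and follows essentially the same route as the paper: the identification $v = d\,u_2$ of the harmonic conjugate of $u_1$ with the (normalized) conjugate solution, whose boundary data on the cuts are matched through the flux decomposition \eqref{eq:decomposition} (this is exactly the paper's Lemma~\ref{lemma: conj-hqr2}), followed by the Cauchy--Riemann relation $|\nabla u_2| = |\nabla u_1|/d$ and evaluation of the two moduli as line integrals giving $d$ and $1/d$. Your one deviation is a clarifying one: you integrate $|\nabla u_2|$ along an equipotential of $u_2$ (a steepest-descent line of $u_1$, where the integral collapses to the unit potential drop divided by $d$), which is the reading under which $\symM(\tilde{\symD}) = 1/d$ actually comes out, and your explicit dismissal of the saddle points as a finite set of zero arclength makes precise a point the paper leaves implicit.
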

This reciprocal identity can be used in measuring the relative error of conformal mapping. It should be noted, that the mapping depends on $3m-6$ parameters, moduli. Thus, theoretically it is possible to have an incorrect result for some of the moduli such that the reciprocal identity holds. However, probability of consistently having incorrect moduli for significant applications is extremely low.




\begin{lemma} \label{lemma: conj-hqr2}
Let $\symD$ be a multiply connected domain and let $u$ be the harmonic solution of the Dirichlet-Neumann problem. Suppose that $v$ is the harmonic conjugate function of $u$ such that $v(\textrm{Re}\, \tilde{z}, \textrm{Im}\, \tilde{z}) = 0$, where $\tilde{z}$ is the intersection point of $E_0$ and $\gamma_0^+$, and $d$ is a real constant given by \eqref{eqn: constant-d}. If $\tilde{u}$ is the harmonic solution of the Dirichlet-Neumann problem associated with the conjugate problem of $\symD$, then $v = d\tilde{u}$.
\end{lemma}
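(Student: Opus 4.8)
The plan is to mimic the proof of Lemma~\ref{lemma: conj-hqr} for the simply connected case, transferred to the cut domain $\tilde{\symD}$, and then invoke uniqueness of the mixed boundary value problem. The essential tool is that $f = u + iv$ is analytic, so the Cauchy--Riemann equations force $\nabla v$ to be $\nabla u$ rotated by a right angle; consequently $|\nabla v| = |\nabla u|$, the level lines of $v$ coincide with the steepest-descent (gradient) lines of $u$, and conversely the level lines of $u$ are gradient lines of $v$. Since $v/d$ is harmonic (a scalar multiple of the conjugate of a harmonic function), it suffices to check that $v/d$ satisfies every condition defining the conjugate Dirichlet--Neumann problem on $\tilde{\symD}$; uniqueness of that solution then gives $v/d = \tilde{u}$, i.e.\ $v = d\tilde{u}$.

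First I would verify the \emph{type} of each boundary condition. On each component $\partial E_j$ the original solution $u$ is constant (Dirichlet), so $\partial u/\partial t = 0$; by the Cauchy--Riemann rotation this yields $\partial v/\partial n = 0$, matching the Neumann condition imposed on $\tilde{u}$ along $\partial E_j$. Each cut $\gamma_j$ is, by construction, assembled from steepest-descent curves of $u$, that is, integral curves of $\nabla u$; along such a curve the tangent is parallel to $\nabla u$, whence $\partial v/\partial t = 0$ and $v$ is constant on $\gamma_j$. Thus $v$ carries a Dirichlet datum on every cut, exactly as the conjugate problem requires.

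Next I would pin down the \emph{values} of those Dirichlet data and match them to Algorithm~\ref{alg:jumps} (after dividing by $d$). The normalization $v(\tilde{z}) = 0$ together with constancy on $\gamma_0^+$ gives $v = 0$ on $\gamma_0^+$. For any other cut, the value of $v$ is the accumulated change of $v$ along the oriented boundary from $\gamma_0^+$ up to that cut. Along a segment $\Gamma_{j,k}$ of a contour line of $u$ the tangent is orthogonal to $\nabla u$, so $dv = \nabla v \cdot d\mathbf{r} = \pm|\nabla u|\,ds$, and integrating produces a change of $\int_{\Gamma_{j,k}} |\nabla u|\,ds = d_k$, precisely the partial modulus in the decomposition~\eqref{eq:decomposition}. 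Hence the cumulative change of $v$ between consecutive cuts equals the cumulative sum of the $d_k$, so the Dirichlet value of $v$ on each cut is exactly the cumulative sum assigned by Algorithm~\ref{alg:jumps}. In particular, a full traversal around the inner components returns the period $\sum_k d_k = d$ of~\eqref{eqn: constant-d}, giving $v = d$ on $\gamma_0^-$; division by $d$ reproduces the $0,1,\ldots$ normalization of $\tilde{u}$.

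The main obstacle is the bookkeeping at the saddle points, where $\nabla u = 0$ and the correspondence between level lines and gradient lines degenerates. I would argue that these points form a finite set and are therefore negligible both for the line integrals $\int_{\Gamma_{j,k}} |\nabla u|\,ds$ and for uniqueness of the mixed problem: $v$ extends continuously across each saddle, the decomposition~\eqref{eq:decomposition} splits the total flux $d$ cleanly among the segments meeting there, and the two steepest-descent branches leaving a saddle carry the same constant value of $v$, so the assigned cut values are consistent. With the boundary conditions of $v/d$ matched to those of $\tilde{u}$ in both type and value, uniqueness of the conjugate Dirichlet--Neumann problem on the simply connected domain $\tilde{\symD}$ forces $v/d = \tilde{u}$, which completes the proof.
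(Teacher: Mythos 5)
Your proposal is correct and follows essentially the same route as the paper's own proof: verify that $v$ satisfies the boundary conditions of the conjugate problem---Neumann on each $\partial E_j$ via the Cauchy--Riemann orthogonality $\langle \nabla u, \nabla v\rangle = 0$, and Dirichlet data on the cuts equal to the accumulated jumps $\sum_j d_j$---and then conclude $v = d\tilde{u}$ by uniqueness for the mixed boundary value problem. You in fact supply more detail than the paper does (the explicit integration of $dv$ along the contour segments $\Gamma_{j,k}$ to pin down the cut values, and the consistency check at the saddle points, about which the paper's proof is silent), so nothing essential is missing.
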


\begin{proof}
It is clear that $v, \tilde{u}$ are harmonic. By Cauchy-Riemann equations, we have $\langle \nabla u, \nabla v \rangle = 0$. We may assume that the gradient of $u$ does not vanish on $\partial E_j, j=0,1, \ldots, m$. Then on $\partial E_0$, we have $n = -\nabla u / |\nabla u|$, where $n$ denotes the exterior normal of the boundary. Likewise, we have $n = \nabla u / |\nabla u|$ on $\partial E_j, j=1,2,\ldots,m$. Therefore
\[
\frac{\partial v}{\partial n} = \langle \nabla v, n \rangle = \pm\frac{1}{|\nabla u|} \, \langle \nabla v, \nabla u \rangle = 0.
\]
On the cutting curves, we have from Cauchy-Riemann equations $|\nabla u| = |\nabla v|$, and from the jumping between cutting curves that $d = \sum_{j=0}^n d_j$. These results together imply that on the $n$th jump, we have on the corresponding cutting curve $\gamma_k$
\[
v= \sum_{j=0}^n d_j.
\]
Then by the uniqueness theorem for harmonic functions \cite[p. 166]{ahl2}, we conclude that $v = d\tilde{u}$.

Lastly, the proof of univalency of $\varphi= u + iv$ follows from the proof of univalency of $f$ in \cite[Lemma 2.3]{hqr}.
\end{proof}

%
%

\subsection{Outline of the Algorithm}
For convenience we use $\{\gamma\}$ and $\{\partial E\}$ to denote the sets of all cuts and boundaries, respectively.
\begin{alg}(Conjugate Function Method for Multiply Connected Domains of type $R$) \label{alg: hqrmcd}
\begin{enumerate}
\item Solve the  Dirichlet problem to obtain the potential function
$u_1$ and the modulus $d = \symM(\symD)$.
\item Choose one path of steepest descent reaching the outer boundary $E_0$, $\gamma_0$.
\item Identify the saddle points $s_m$.
\item For every saddle point: Find paths $\gamma_k$, $k>1$, joining two conformally visible boundaries $\partial E_i$ and $\partial E_j$ by  
  finding the paths of steepest descent meeting at the point $s_m$.
\item For every $E_i$: Choose a corresponding contour $\Gamma_i$, compute its subdivision
  $\Gamma_{i,k}$ induced by the paths $\{\gamma\}$, and the corresponding jumps
  $d_k = \int_{\Gamma_{i,k}} |\nabla u| ds$. 
\item Construct the conjugate domain $\tilde{\symD}$ by forming an oriented boundary using paths $\{\gamma\}$ and $\{\partial E\}$.
\item Set the boundary conditions along paths $\{\gamma\}$ by accumulating jumps in the order
  of traversal.
\item Solve the Dirichlet-Neumann problem on $\tilde{\symD}$ for $u_2$.
\item Construct the conformal mapping $\varphi = u_1 + idu_2$.
\end{enumerate}
\end{alg}

\begin{figure}
  \centering
  \subfloat[Non-symmetric case: Two saddle points; Five jumps.]{
\begin{tikzpicture}[scale=1.2,>=stealth]

\draw[rounded corners=8pt,thick,fill=black!7!white] (-2.2,-2) -- (2.2,-2) -- (2.2,2.123) -- (-2.2,2.132) -- cycle; 

\draw[thick] (1,0.932) -- node[midway, anchor=north west] {$\gamma_2$} (0,-0.8) -- (-1,0.932) node[midway, anchor=north east] {$\gamma_1$};
\draw[thick] (0,-2) -- (0,-1.3) node[midway, anchor=east] {$\gamma_0$};

\filldraw[thick,fill=white] (0,-0.8) circle (.5cm) node [anchor=base] {$E_1$};
\filldraw[thick,fill=white] (1,0.932) circle (.5cm) node [anchor=base] {$E_3$};
\filldraw[thick,fill=white] (-1,0.932) circle (.5cm) node [anchor=base] {$E_2$};

\draw[->,thick] (0,-0.8) ++ (-0.1042,-0.5909) arc (260:130:0.6cm);
\draw[->,thick] (-1,0.932) ++(0.2052,-0.5638) arc (290:-50:0.6cm);
\draw[->,thick] (0,-0.8) ++(-0.2052,0.5638) arc (110:70:0.6cm);
\draw[<-,thick] (1,0.932) ++(-0.2052,-0.5638) arc (-110:230:0.6cm);
\draw[<-,thick] (0,-0.8) ++ (0.1042,-0.5909) arc (-80:50:0.6cm);

\draw (-0.5,-0.9) node [anchor=east] {$\Gamma_{1,1}$};
\draw (-1,0.932) ++(-0.2,0.45) node [anchor=south east] {$\Gamma_2$};
\draw (0,-0.2) node [anchor=south] {$\Gamma_{1,2}$};
\draw (1,0.932) ++(0.25,0.45) node [anchor=south west] {$\Gamma_3$};
\draw (0.55,-0.9) node [anchor=west] {$\Gamma_{1,3}$};
\end{tikzpicture}
}\quad
  \subfloat[Symmetric case: One saddle point; Four jumps.]{
\begin{tikzpicture}[scale=1.2,>=stealth]

\filldraw[thick,shift={(6cm,-0.2932cm)},fill=black!7!white] (0,0.3547) circle (2.2cm);

\draw[thick,shift={(6cm,-0.2932cm)}] (0,-0.8) -- (0,0.3547);
\draw[thick,shift={(6cm,-0.2932cm)}] (1,0.932) -- (0,0.3547);
\draw[thick,shift={(6cm,-0.2932cm)}] (-1,0.932) -- (0,0.3547);

\draw[thick,shift={(6cm,-0.2932cm)}] (0,-1.8453) -- (0,-1.3) node[midway, anchor=east] {$\gamma_0$};

\filldraw[thick,shift={(6cm,-0.2932cm)},fill=white] (0,-0.8) circle (.5cm);
\filldraw[thick,shift={(6cm,-0.2932cm)},fill=white] (1,0.932) circle (.5cm);
\filldraw[thick,shift={(6cm,-0.2932cm)},fill=white] (-1,0.932) circle (.5cm);

\draw[->,thick,shift={(6cm,-0.2932cm)}] (0,-0.8) ++ (-0.1042,-0.5909) arc (260:100:0.6cm);
\draw[->,thick,shift={(6cm,-0.2932cm)}] (-1,0.932) ++(0.4596,-0.3857) arc (320:-20:0.6cm);
\draw[<-,thick,shift={(6cm,-0.2932cm)}] (1,0.932) ++(-0.4596,-0.3857) arc (-140:200:0.6cm);
\draw[<-,thick,shift={(6cm,-0.2932cm)}] (0,-0.8) ++ (0.1042,-0.5909) arc (-80:80:0.6cm);
%
%
%
\draw[shift={(6cm,-0.2932cm)}] (-0.5,-0.9) node [anchor=east] {$\Gamma_{1,1}$};
\draw[shift={(6cm,-0.2932cm)}] (-1,0.932) ++(0,0.55) node [anchor=south] {$\Gamma_2$};
\draw[shift={(6cm,-0.2932cm)}] (1,0.932) ++(0,0.55) node [anchor=south] {$\Gamma_3$};
\draw[shift={(6cm,-0.2932cm)}] (0.55,-0.9) node [anchor=west] {$\Gamma_{1,2}$};

\end{tikzpicture}
}
\caption{Examples of non-symmetric and symmetric domains with cuts $\gamma$ and decomposition of jumping curves $\Gamma_{i,k}$.} \label{fig: cutting-jump}

\end{figure}
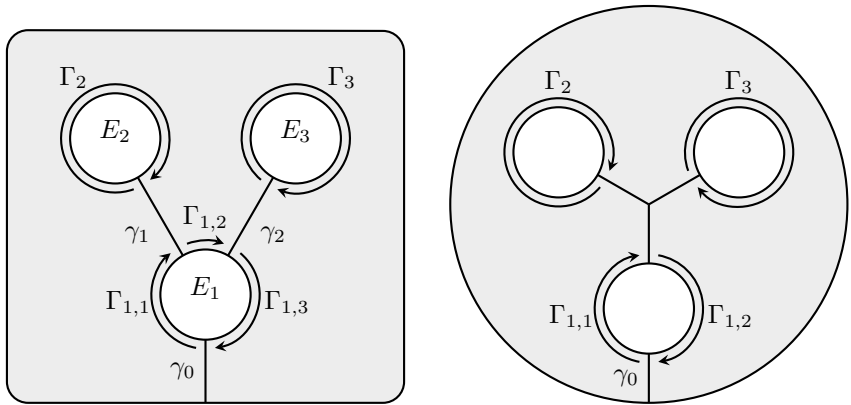

\subsection{Moduli and Degree of Freedom}
For $m+1$ connected domains, we have $3m-3$ different moduli, degrees of freedom. In general, we have $2m-1$ jumps, and $m-1$ saddle points. This sums up to $3m-2$. However the cut $\gamma_0$ can be chosen so that the first and the last jumps, $d_1$ and $d_{2m-1}$, respectively, are equal. Thus the number of degrees of freedom is reduced by one, and we obtain $3m-3$.

\section{Conjugate Function Method for Multiply Connected Domains of Type $Q$}
\label{sec:cfmmcdQ}

Let us next focus on the quadrilateral-like case, i.e., type $Q$. Conceptually the
construction is much simpler than that of type $R$. 
Let the exterior boundary $\partial E_0$
be composed of four arcs in the sense of Section~\ref{sec:dnprob} above, and
the interior boundaries $\partial E_j$, $j=1,\ldots,m$, have Neumann boundary conditions 
$\partial u/\partial n = 0$. Intuitively it is clear that the definition of the
conjugate problem has to involve a Dirichlet-Neumann map, and that there is
no need for any cutting process. Once the potentials over $\partial E_j$, $j=1,\ldots,m$,
have been defined for the conjugate problem, the reciprocal identity
follows immediately.

\subsection{Dirichlet Conditions Over Interior Boundaries}
\label{sec:dirichletQ}

Let us consider the configuration of Figure~\ref{fig:dirichletQ}. In the initial problem
the Dirichlet boundary conditions are $u=1$ and $u=0$ on left and right hand edges, respectively.
On every interior boundary $\partial E_j$, $j=1,\ldots,m$, there are exactly two points with
unique potentials that correspond to local maxima and minima, Figure~\ref{fig:dirichletQa}.
Let us consider $\partial E_j$ and denote the point with maximum potential $x$.
Point $x$ is connected with a point $s$ on either one of the Dirichlet boundaries
via a curve of steepest ascent $\rho$, Figure~\ref{fig:dirichletQb}. 
In the conjugate problem, the Dirichlet boundaries become Neumann ones.
Along the Neumann edges 
the solution will be linear and have all values in the interval $[0,1]$.
Thus, the potential at the point $s$, and by construction at $x$ since $\rho$
is an equipotential curve in the conjugate problem,
can be found using simple interpolation.
The same procedure can be applied to the point of local minimum on $\partial E_j$.
The resulting map is given in Figure~\ref{fig:dirichletQc}.

\begin{figure}
  \centering
  \subfloat[Local maxima and minima of the interior boundaries; Equipotential curves.]{\label{fig:dirichletQa}\includegraphics[width=0.3\textwidth]{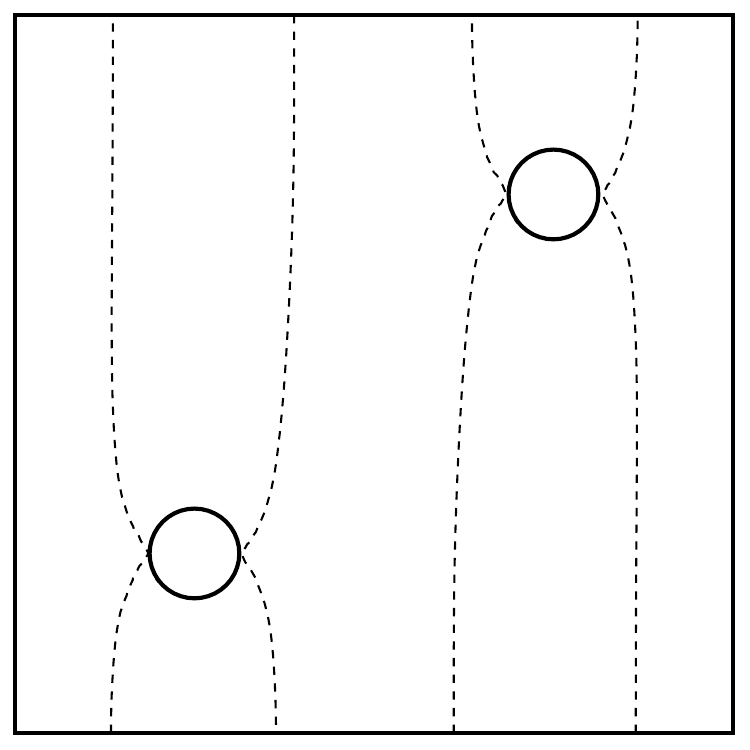}}\quad
  \subfloat[Geometric setting of potentials; Curves of steepest descent and ascent $\rho_m$.]{\label{fig:dirichletQb}\includegraphics[width=0.3\textwidth]{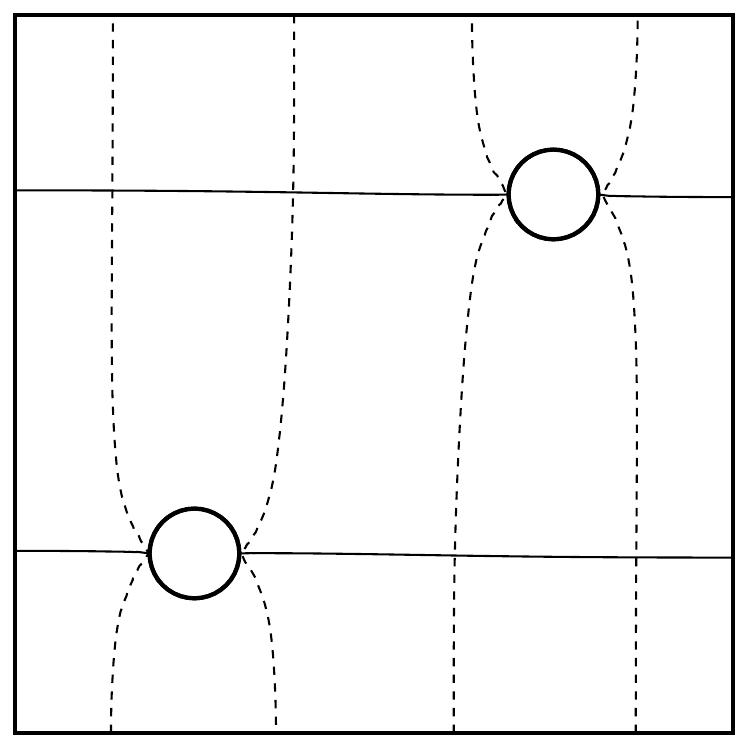}}\quad
  \subfloat[Map.]{\label{fig:dirichletQc}\includegraphics[width=0.3\textwidth]{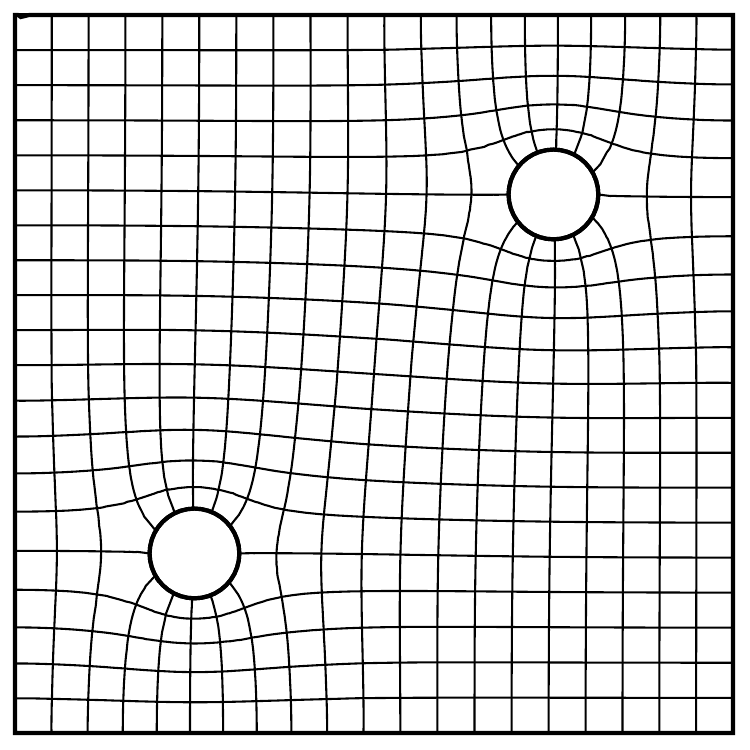}}
  \caption{$Q$-type: Dirichlet conditions for the conjugate problem: Initially on the left hand edge $u=1$ and right hand edge $u=0$.}\label{fig:qsteps}
\label{fig:dirichletQ}
\end{figure}

\subsection{Outline of the Algorithm}
Let us assume that in the initial Dirichlet-Neumann problem, along the boundary segment $\gamma_1$
the Dirichlet boundary condition is $u=1$. 
\begin{alg}(Conjugate Function Method for Multiply Connected Domains of type $Q$) \label{alg: hqrmcdq}
\begin{enumerate}
\item Solve the  Dirichlet-Neumann problem to obtain the potential function
$u_1$ and the modulus $d = \symM(\symD)$.
\item Locate the local maxima and minima on the interior boundaries $\partial E_j$, $j=1,\ldots,m$.
\item For every local maximum $x_m$: Find paths of steepest ascent $\rho_m$, $m>1$, 
connecting $x_m$ on $\partial E_i$ with the point $s_m$ on $\gamma_1$.
\item Interpolate the potential on $s_m$ on $\gamma_1$ when $\gamma_1$ is a Neumann edge.
\item Construct the conjugate domain $\tilde{\symD}$ by
performing the Dirichlet-Neumann map on $\partial E_0$ and setting the Dirichlet boundary
conditions on $\partial E_j$, $j=1,\ldots,m$, to values obtained in the previous step.
\item Solve the Dirichlet-Neumann problem on $\tilde{\symD}$ for $u_2$.
\item Construct the conformal mapping $\varphi = u_1 + idu_2$.
\end{enumerate}
\end{alg}

\section{Numerical Implementation of the Algorithms}
\label{sec:implementation}
\noindent We use the  implementation of the $hp$-FEM method described in detail 
in \cite{hrv1}. The strategy for computing the equipotential lines from 
the canonical domain onto the domain of interest can be found in \cite{hqr}.

The main difference between the two algorithms are the
cuts between the sets $E_j$, $j=1,2, \ldots, m$ in the case of type $R$, 
especially locating the saddle point between sets. We use the Ridge method, 
proposed by Ionova and Carter \cite{ionova-carter}, to locate the saddle points. 

To find the actual cutting curve, we bisect 
$\partial E_j$, $j=1, \ldots, m$ and move against the gradient of $u$. 
By doing so, we search for a point on $\partial E_j$ such that we end 
up within a tolerance from the saddle point.

If the cut can be computed analytically, the cut line can be embedded in the a pirori mesh and thus the
same mesh can be used in both problems. In this situation it is sufficient to perform
elemental integration once. The common blocks in the assembled linear systems 
can be eliminated as in \cite[{Section 4.2}]{hqr}.
In the general case, where the cutting has to be computed numerically the meshes may vary over large regions
and the positive bias from reusing the mesh is lost. In the numerical experiments below we have
used different refinements in two cases in order to test the sensitivity of the algorithm to mild
perturbations of the mesh.
The numerical algorithm is outlined in Figure~\ref{fig:alg}.
\begin{figure}
  \begin{algorithm}[H]
    \KwData{Domain $\symD$, tolerances $\epsilon_i$, $i=1,\ldots,4$ for the $\symM(\symD)$, saddle point, cuts, 
      and the reciprocal error, respectively.}
Discretize the domain $\symD$, solve $u_1$ and compute $\symM(\symD)$ with the desired tolerance $\epsilon_1$\;
\While{True}{
  Locate the saddle points (within tolerance $\epsilon_2$)\;
  Search the cuts (within tolerance $\epsilon_3$)\;
  Discretize the domain $\tilde{\symD}$, solve $u_2$, and compute $\symM(\tilde{\symD})$\;
  \lIf{The reciprocal error is below tolerance $\epsilon_4$ }{break}
  Decrease tolerances for the saddle points and the cuts, $\epsilon_2$ and $\epsilon_3$, respectively\;
}
\end{algorithm}
\caption{$R$-type: High-level description of the numerical algorithm.}\label{fig:alg}
\end{figure}

For the $Q$-type, similar iteration can be used to refine the potential
values. In this case it may be necessary to refine the geometric search
for the potential values.

\section{Numerical Experiments}
\label{sec:experiments}
In this section we discuss a series of benchmark problems and 
experiments carefully designed to illustrate different aspects of the algorithms.
In electrostatics the $Q$-type refers to resistor design problems
with multiple voltage domains
and the $R$-type to capacitor (electrical condenser) design ones. In practice, designing
integrated circuits multiple voltage domains is labor intensive and
there is a need for advanced design systems \cite{Iadanza}.
We have selected two problems of both types from literature and designed 
the experiments for $R$-type since
to our knowledge there are no reported benchmark problems with the actual maps
for the $R$-type domains. 

The use of the reciprocal relation as an error measure is formalized 
in the following definition:
\begin{definition}[Reciprocal error]
Using Proposition \ref{prop: reci} we can define two versions of the reciprocal error.
First for non-normalized jumps
  \begin{equation}\label{eq:nonnormal}
  e_r^d = |1 - \symM(\symD)/\symM(\tilde{\symD})|, 
\end{equation}
and second for the normalized ones
\begin{equation}\label{eq:normal}
  e_r^n = |1 - \symM(\symD)\symM(\tilde{\symD})|. 
\end{equation}
\end{definition}
and for convenience an associated error order
\begin{definition}[Error order]
  Given a reciprocal error $e_r^\star$, the positive integer $e_i$,
\begin{equation}
  e_i = |\lceil\log(e_r^\star)\rceil|,
\end{equation}
is referred to as the error order.
\end{definition}

Within the experiments we first consider cases with symmetries where the
cut can be computed analytically, and then a general case with two saddle points
(extraordinary points). 
We are interested in convergence in the energy norm as well as pointwise 
convergence.

For the general case the use of reciprocal error is not straightforward, 
however. The cuts must be approximated
numerically and the related approximation error leads to inevitable
\textit{consistency error} since the jumps depend on the chosen cuts.
Thus, in order to have a similar confidence in the general case as for the symmetric cases,
one should consider a sequence of approximations for the cuts as outlined above (Figure~\ref{fig:alg}). 
Here, however, we are
content to show via the conformal map that the chosen cut is a reasonable one, and
the resulting map has the desired characteristics.

Of course, the exact potential functions are not known. However, we can always
compute contour plots of the quantities of interest, that is, the absolute values of the derivatives,
and get a qualitative idea of the overall performance of the algorithm.
Naturally, this also measures the pointwise convergence of the Cauchy-Riemann 
problem.

Data on benchmarks and experiments, including representative numbers for degrees
of freedom assuming constant $p=12$, is given in Table~\ref{tbl:benchmark} and
Table~\ref{tbl:expCaps2}, respectively.
In all cases the setup of the geometry is the most expensive part in terms
of human effort and time. 
As is usual in $hp$-FEM, the computational cost in these
relatively small systems is in integration and handling of the sparse systems.
The actual computations take minutes on standard desktop hardware using our
implementation of the algorithms.

\subsection{Benchmarks}
In \cite{trefethen2} Trefethen gives an excellent introduction to
the connection between conformal maps and computation of resistances
of idealized planar resistors. In our setting the quantity of interest,
the resistance of the resistor, is equal to the modulus of the
conjugate domain.
\subsubsection{Computation of resistances for interior contacts}
Our first benchmark, Figure~\ref{fig:trefethen}, is a symmetric triply connected bar. 
On the interior square boundaries we have Dirichlet boundary conditions and
on the outer boundary Neumann ones. In the context of the application, 
the voltages are applied on the interior and the exterior is insulated.
This example was first discussed in \cite{trefethen2} where the computation 
were carried out with simply connected Schwarz-Christoffel transformations by 
exploiting the symmetry to subdivide the domain into four simply connected 
domains. In \cite{dek} the same problem is computed using the method of DeLillo et al. 
\cite{dep} without exploiting the symmetry.

The domain is enclosed by
$B = [0,3]\times[-1/2,1/2]$.
There are two square holes 
\[
H_1=[1/4,3/4]\times[-1/4,1/4],\quad H_1=[9/4,11/4]\times[-1/4,1/4],
\]
and indentations
\[
I_1=[1,2]\times[-1/2,-1/4],\quad I_2=[1,2]\times[1/4,1/2].
\]
The domain $\Omega = B \setminus (H_1 \cup H_2 \cup I_1 \cup I_2)$.
This problem is neither of type $R$ nor $Q$. 
Since the contacts are on the interior boundaries, cuts with Dirichlet
conditions must be present in the conjugate problem.
Here we cut along $y=0$,
set $u=0$, if $1 \leq x \leq 2$ and $u = \pm 1/2$ otherwise.

The computed value of resistance $R = 2.768867502692$ is equal
to those reported in \cite{trefethen2} and \cite{dek}. Notice that
in Figure~\ref{fig:trefethenc}, the maps include details also
around the contacts.
\begin{figure}
  \centering
  \subfloat[Domain.]{\includegraphics[width=0.3\textwidth]{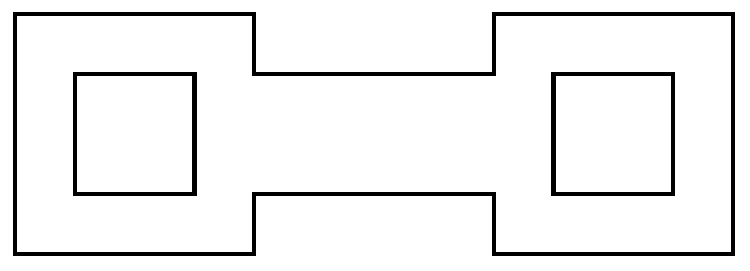}}\quad
  \subfloat[Mesh.]{\includegraphics[width=0.3\textwidth]{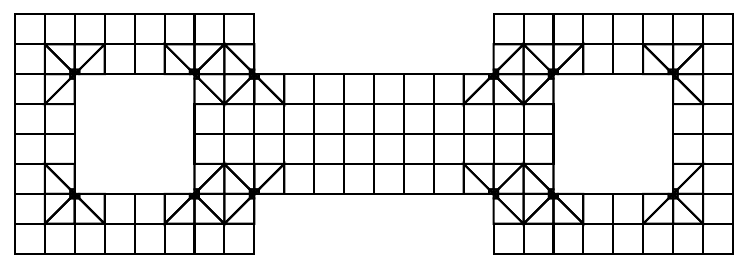}}\quad
  \subfloat[Map.]{\label{fig:trefethenc}\includegraphics[width=0.3\textwidth]{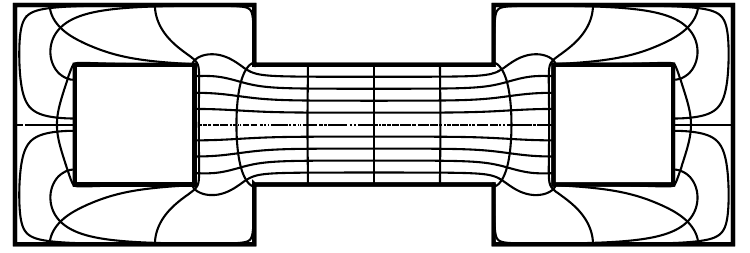}}
  \caption{Interior contacts.}\label{fig:trefethen}
\end{figure}

\subsubsection{Computation of resistances for quadrilaterals}
\label{sec:delillo}
Our second benchmark is of type $Q$ (see Figure~\ref{fig:delillo}), 
a resistor first computed  in \cite{dek}. The domain is enclosed by
$B = [-3/2,3/2]\times[-3/4,3/4]$.
There are two square holes (rotated by $\pi/4$)
\[
H_1=\{(-1/2,0),(-3/4,1/5),(-1,0),(-3/4,-1/5)\},
\]
\[H_2=\{(1/2,0),(3/4,-1/5),(1,0),(3/4,1/5)\},
\]
and indentations
\[
I_1=[-3/2,1/2]\times[-3/4,-1/2],\quad I_2=[-1/2,3/2]\times[1/2,3/4].
\]
The domain $\Omega = B \setminus (H_1 \cup H_2 \cup I_1 \cup I_2)$.
The contacts are on $x=-3/2$ and $y=-3/4$.
The computed value of resistance $R = 2.841998463680$ is equal
to that reported in \cite{dek}.

\begin{figure}
  \centering
  \subfloat[Domain.]{\includegraphics[width=0.3\textwidth]{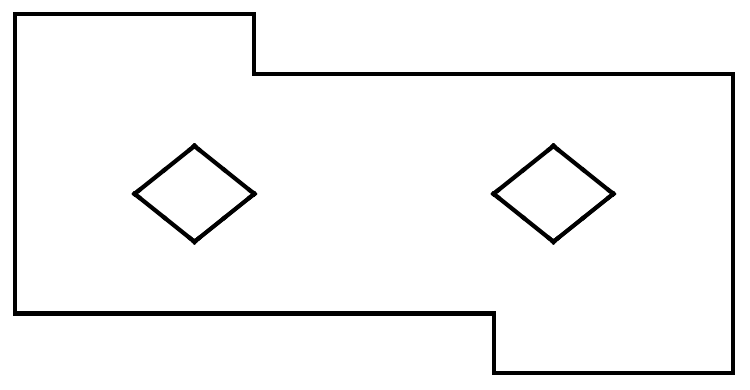}}\quad
  \subfloat[Mesh.]{\includegraphics[width=0.3\textwidth]{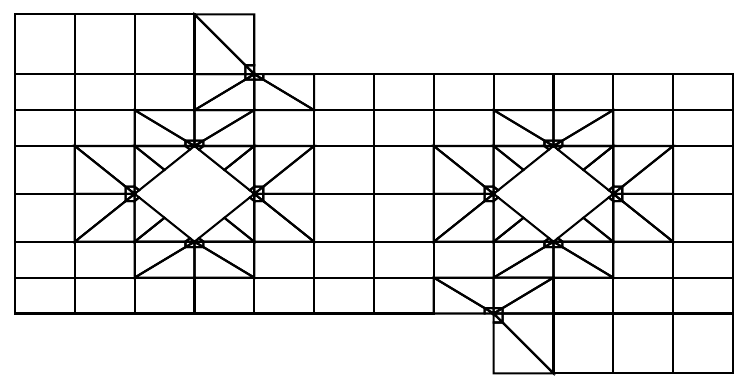}}\quad
  \subfloat[Map.]{\includegraphics[width=0.3\textwidth]{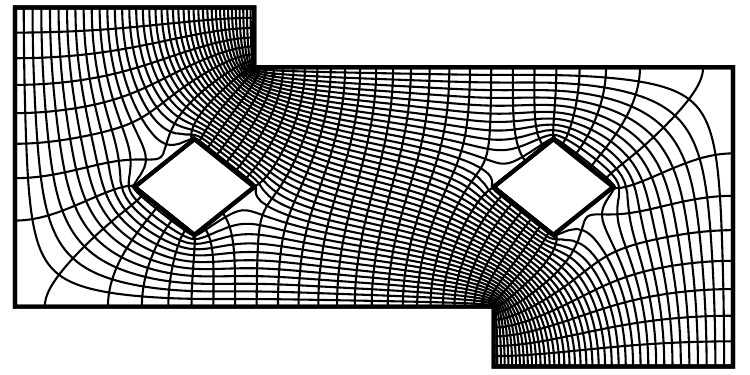}}
  \caption{Resistor.}\label{fig:delillo}
\end{figure}

\subsubsection{Computation of capacities}
\label{sec:bsv}
We consider two cases, Capacitor A and B, examples 7 and 10 from \cite{bsv},
respectively (see Figure~\ref{fig:bsv}). In both cases the domain $\Omega$ is
enclosed within $D=[-1,1]\times[-1,1]$. For Capacitor A, the plates are defined as the union of an equilateral triangle $T$ and its reflection in the real
axis. The vertices of $T$ are the points $(a,0)$, $(b,b-a)/\sqrt{3})$
and $(b,-(b-a)/\sqrt{3})$, where $0<a<b<1$. Here $a=1/5$ and $b=7/10$ and
the computed capacity $\textrm{cap} A = 9.49308124$ is within the estimated error
of the reference value. For Capacitor B, the plates are two slits $\overline{A_sB_s}$
and $\overline{C_sD_s}$,
defined by points $A_s=(-2/3,-1/2)$,
$B_s=(-2/3,1/2)$, $C_s=(1/2,-1/4)$, $D_s=(1/2,1/4)$.
The computed capacity $\textrm{cap} B = 8.47016014$ is also within the estimated error
of the reference value.

\begin{figure}
  \centering
  \subfloat[Capacitor A.]{\includegraphics[width=0.3\textwidth]{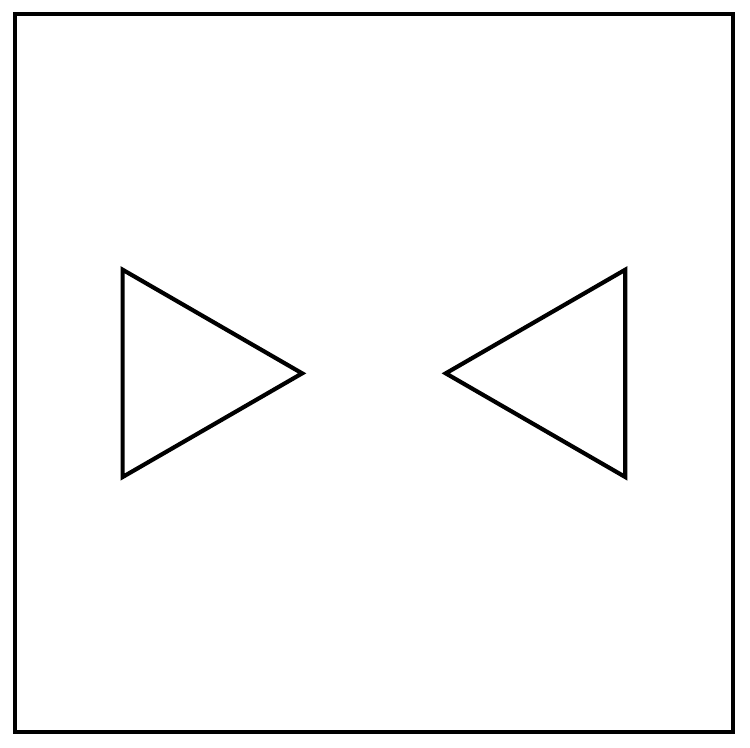}}\quad
  \subfloat[Capacitor B.]{\includegraphics[width=0.3\textwidth]{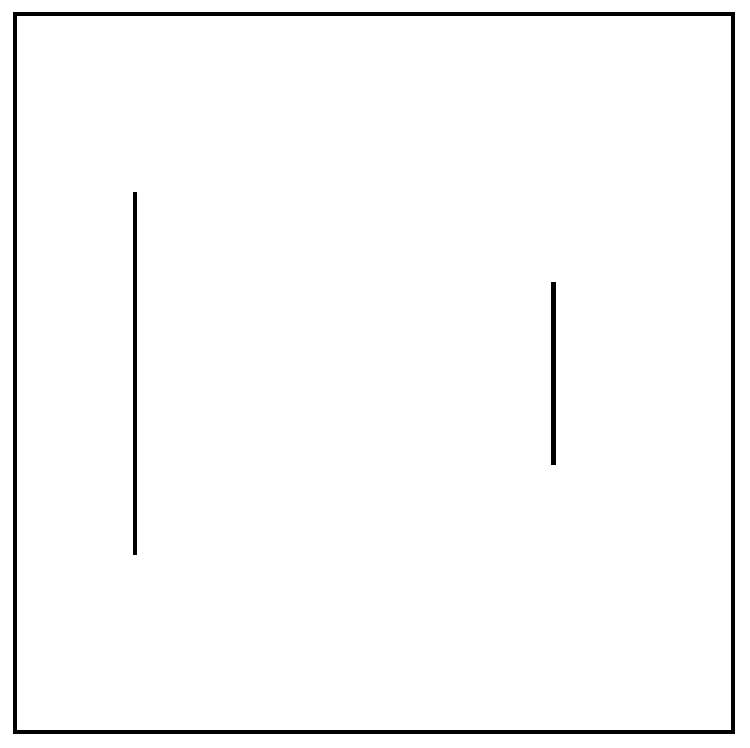}}
  \caption{Capacitors.}\label{fig:bsv}
\end{figure}

\begin{table}
  \centering
\subfloat[Computed resistance.]{\label{tbl:bexpCaps} \begin{tabular}{l|c|c|c|}
     Experiment & Capacity & Error order & (Reference) \\ \hline
     Interior contacts & 2.768867502692 & 12 & (2.76886750270)\\
     Resistor & 2.841998463680 & 11 & (2.8419984)\\
  \end{tabular}
}\\
\subfloat[Computed capacity. (Error) refers to the reported estimated error of the reference.]{\label{tbl:bsvCaps} \begin{tabular}{l|c|c|c|}
     Experiment & Capacity & (Reference) & (Error) \\ \hline
     Capacitor A & 9.49308124 & (9.4930811) & (4e-7)\\
     Capacitor B & 8.47016014 & (8.4701600) & (5e-7)\\
  \end{tabular}
}\\
\subfloat[FEM-data: Mesh: (nodes, edges, triangles, quads); Degrees of freedom given at $p=12$.]{\label{tbl:bexpCaps2} \begin{tabular}{l|c|c}
     Experiment & Mesh & DOF \\ \hline
     Interior contacts & (1569,2804,0,1280) & 187293\\
     Resistor & (667,1236,16,552) & 81935\\
     Capacitor A & (509,946,8,428) & 63143\\
     Capacitor B & (1013,1910,0,896) & 130439\\
  \end{tabular}
}
  \caption{Data on benchmarks.}\label{tbl:benchmark} 
\end{table}

\subsection{Reference Cases}
\label{refCases}

\begin{figure}
  \centering
  \subfloat[Domain.]{\label{fig:refsymmetricDomain}\includegraphics[width=0.30\textwidth]{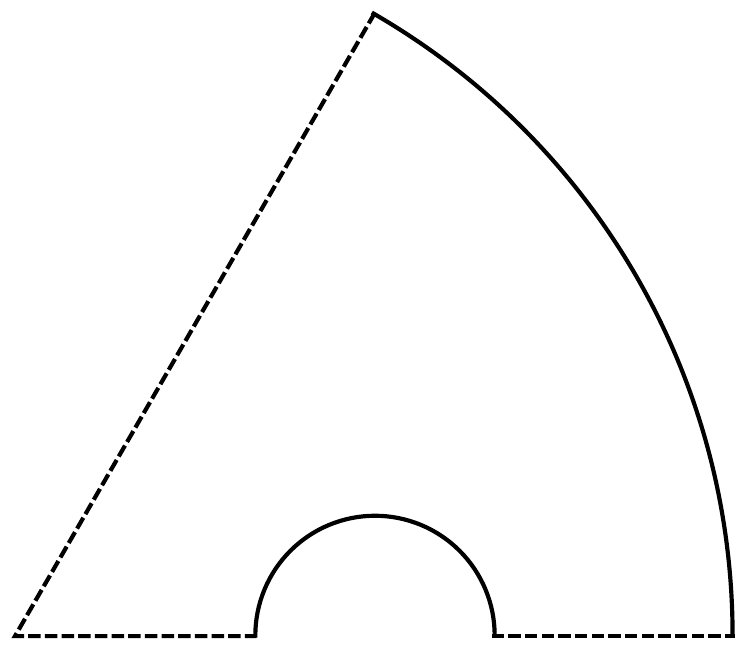}}\quad
  \subfloat[Mesh.]{\label{fig:refsymmetricMesh}\includegraphics[width=0.30\textwidth]{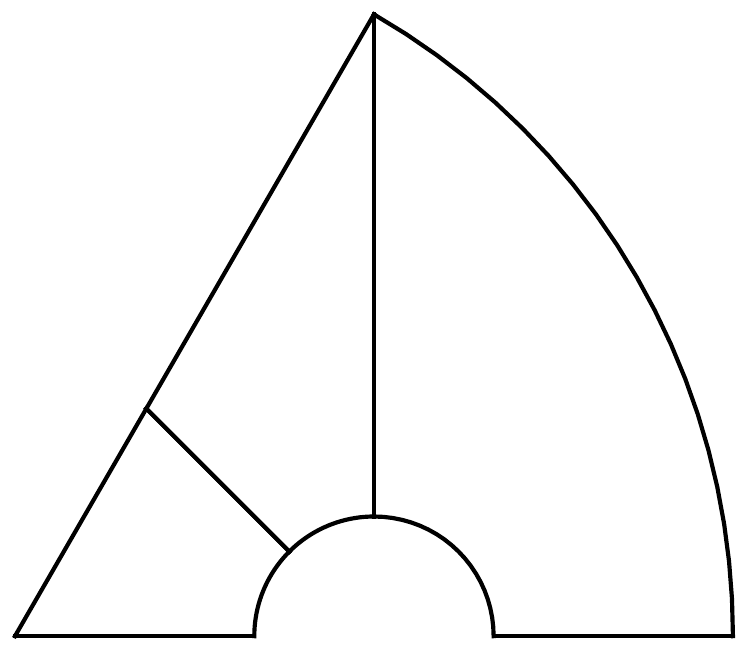}}\quad
  \subfloat[Map.]{\label{fig:refsymmetricMap}\includegraphics[width=0.30\textwidth]{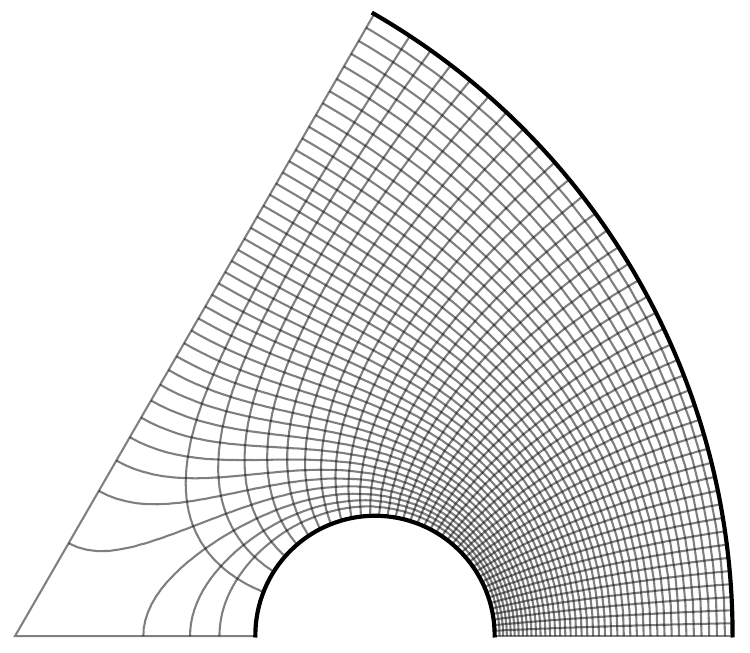}}
  \caption{Three Disks in Circle: Reference case 1.}\label{fig:refsymmetric}
\end{figure}

\begin{figure}
  \centering
  \subfloat[Domain.]{\label{fig:refccDomain}\includegraphics[width=0.30\textwidth]{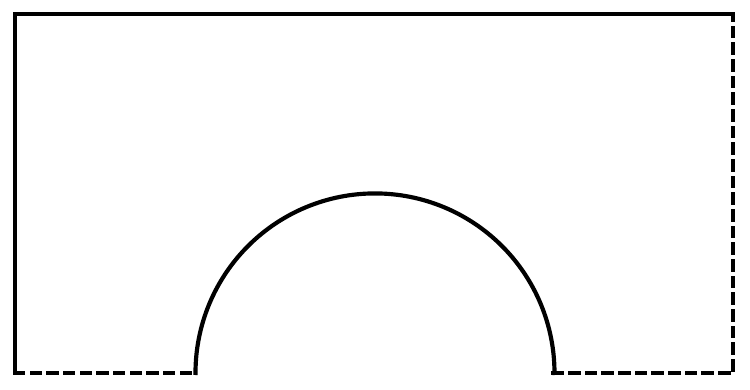}}\quad
  \subfloat[Mesh.]{\label{fig:refccMesh}\includegraphics[width=0.30\textwidth]{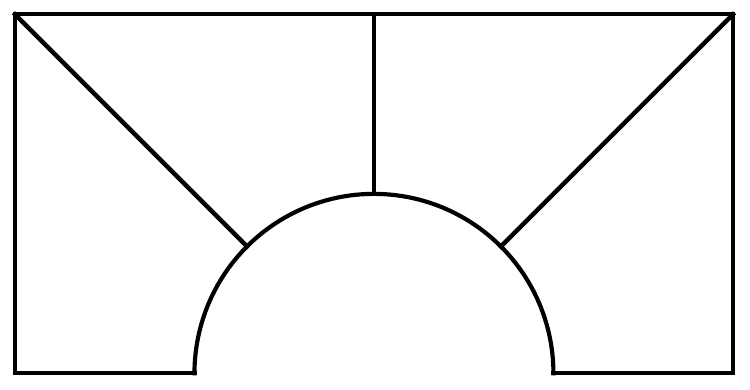}}\quad
  \subfloat[Map.]{\label{fig:refccMap}\includegraphics[width=0.30\textwidth]{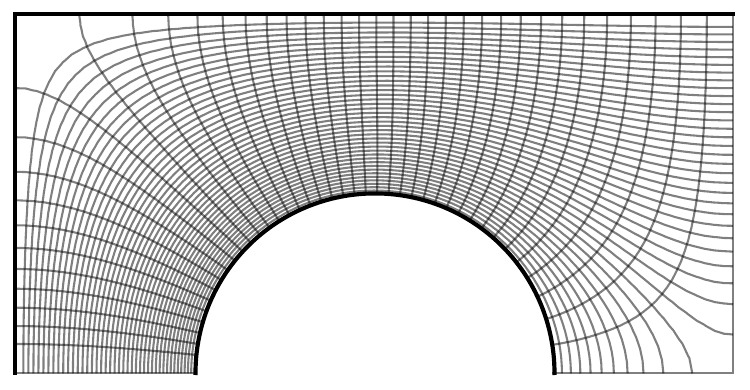}}
  \caption{Two Disks in Rectangle: Reference case 2.}\label{fig:refwcc}
\end{figure}

First we solve two standard problems up to very high accuracy in order to establish
reference results for capacities in cases with symmetries. The domains are shown in 
Figures~\ref{fig:refsymmetric} and \ref{fig:refwcc}. In both cases the $p$-version converges exponentially
as expected. The reference values for capacities are given in Table~\ref{tbl:refCaps}.

\begin{table}
  \centering
  \subfloat[Reference capacities.]{\label{tbl:refCaps} \begin{tabular}{l|c|c}
     Case & Capacity & Error order  \\ \hline     
     1 & 1.61245904853 & 12 \\
     2 & 3.48074407477 & 12 \\ \hline
  \end{tabular}
}\\
\subfloat[Computed capacities.]{\label{tbl:expCaps} \begin{tabular}{l|c|c}
     Experiment & Capacity & Error order \\ \hline
     Three Disks in Circle & 9.67475429123 & 12\\
     Two Disks in Rectangle & 13.922976299110 & 12\\
     Disk and Pacman in Rectangle & 13.3376294414 & 11\\
     Disk and Two Pacmen in Rectangle & 14.37(49228053) & 2 \\ \hline
  \end{tabular}
}\\
\subfloat[FEM-data: Mesh: (nodes, edges, triangles, quads); Degrees of freedom given at $p=12$.]{\label{tbl:expCaps2} \begin{tabular}{l|c|c}
     Experiment & Mesh & DOF \\ \hline
     Three Disks in Circle & (35, 52, 0, 18) & 2785\\
     Two Disks in Rectangle & (34,49,0,16) & 2509\\
     Disk and Pacman in Rectangle & (181,320,4,136) & 20377\\
     Disk and Two Pacmen in Rectangle &(353, 632, 8, 272) &40657 \\ \hline
  \end{tabular}
}
  \caption{$R$-type: Data on experiments.}\label{tbl:usedCapacities} 
\end{table}

\subsection{Symmetric Case: Three Disks in Circle}
\label{case1symmetric}
Consider a unit circle with three disks of radius $r = 1/6$ placed symmetrically so that
their origins lie on a circle of radius $r = 1/2$.
As indicated in Figure~\ref{fig:symmetricDomain} the cut can be computed analytically.
The blending function approach used to compute higher order curved elements is very accurate
if the element edges meet the curved edges at right angles. This is the reason for the
mesh of Figure~\ref{fig:symmetricMesh} where all edges adjacent to disks have been
set optimally.

Notice that due to symmetry, the scaled jumps could also be computed analytically.
In the numerical experiments only computed values of Table~\ref{tbl:expCaps} are used, however.
Since the cut is embedded in the mesh lines, both problems (the original and the conjugate)
can be solved using the same mesh. In this optimal configuration convergence 
in reciprocal relation
is exponential in $p$, which is a remarkable result, see Figure~\ref{fig:symmetricDomainReciprocal}.
Similarly, in Figure~\ref{fig:symmetricDomainContour}, it is clear that the derivatives 
have also converged over the whole domain.

\begin{figure}
  \centering
  \subfloat[Domain.]{\label{fig:symmetricDomain}\includegraphics[width=0.45\textwidth]{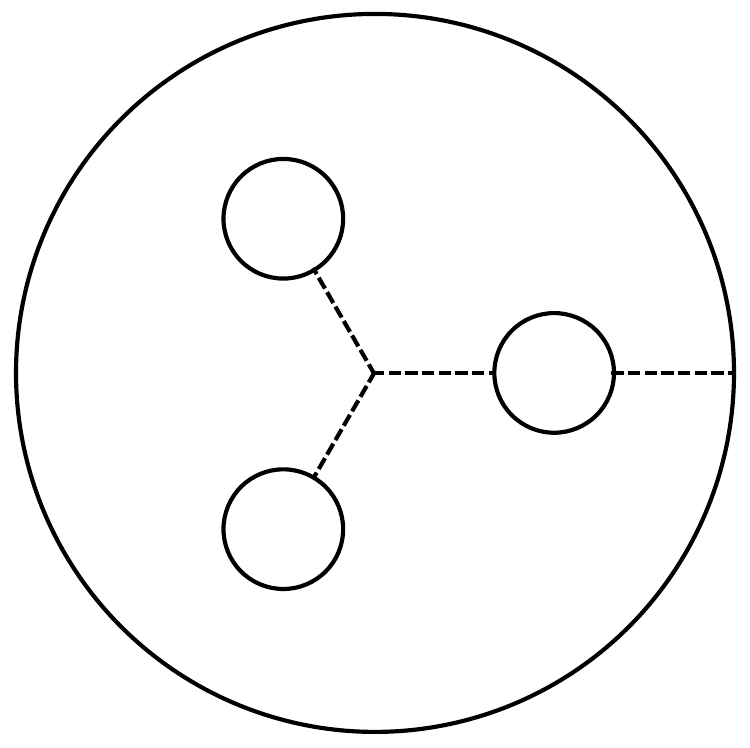}}\quad
  \subfloat[Mesh.]{\label{fig:symmetricMesh}\includegraphics[width=0.45\textwidth]{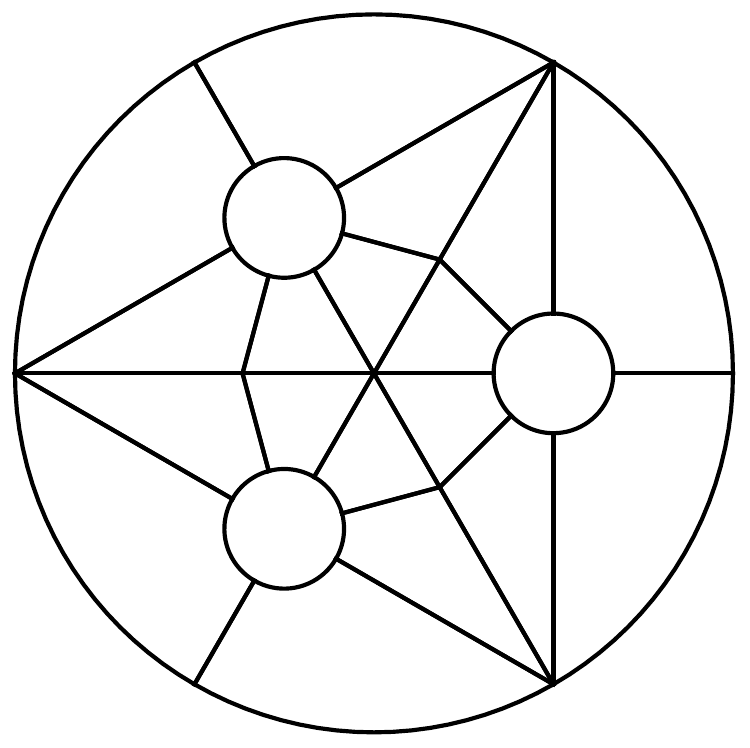}}\\
  \subfloat[Map.]{\label{fig:symmetricMap}\includegraphics[width=0.45\textwidth]{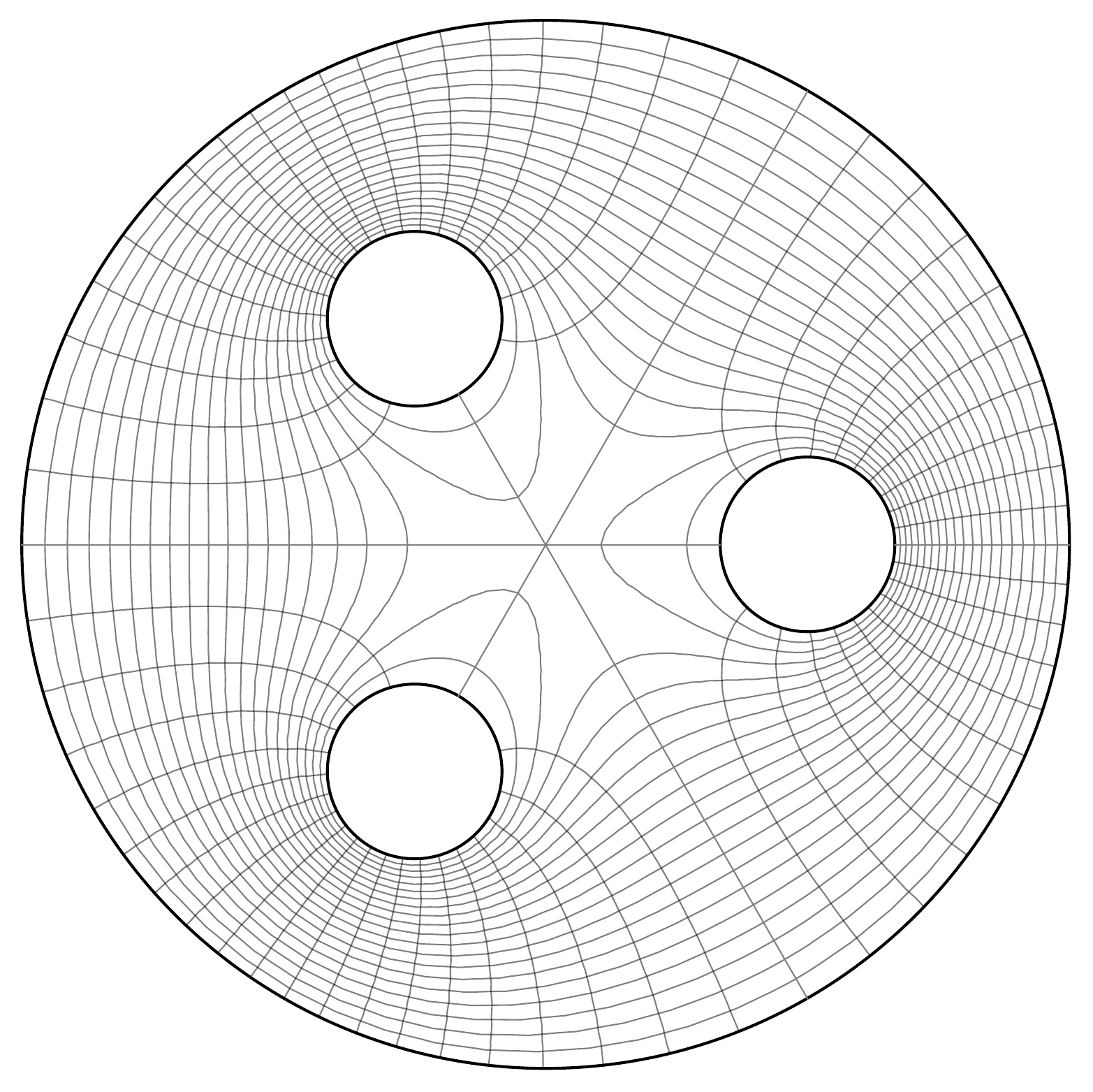}}
  \caption{$R$-type: Fully symmetric case.}\label{fig:symmetric}
\end{figure}

\begin{figure}
  \centering
  \subfloat[Cauchy-Riemann: Contour lines of $|\partial u/\partial x|$ and $|\partial v/\partial y|$.]{\label{fig:symmetricDomainContour}\includegraphics[width=0.45\textwidth]{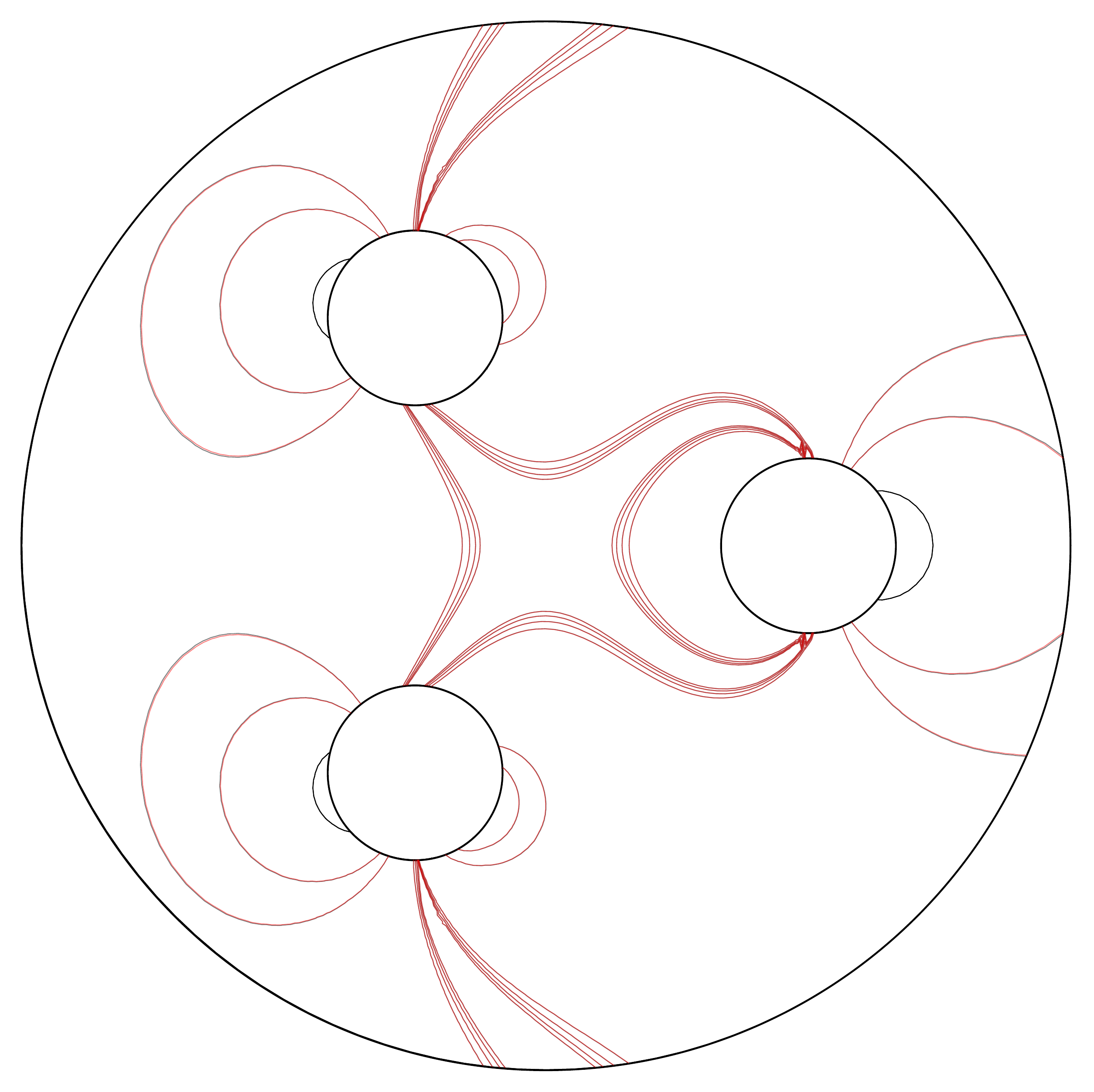}}\quad
  \subfloat[Reciprocal identity: Convergence in $p$; log-plot, error vs $p$.]{\label{fig:symmetricDomainReciprocal}\includegraphics[width=0.45\textwidth]{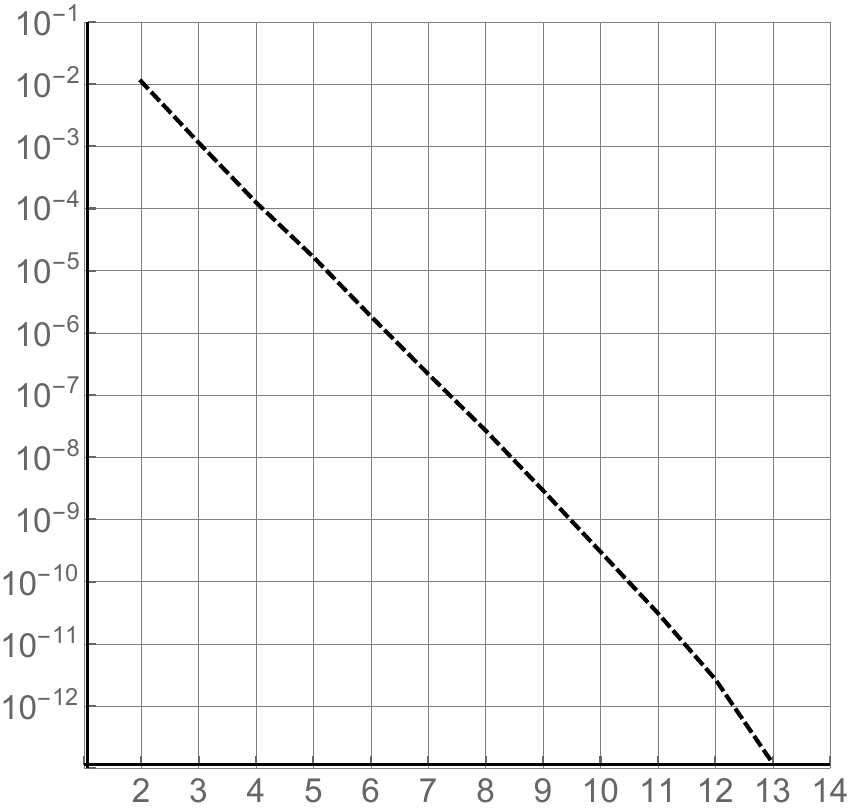}}
  \caption{$R$-type: Fully symmetric case.}\label{fig:symmetricCR}
\end{figure}

\subsection{Axisymmetric Cases}
\label{sec:axisymm}
In the next two cases we maintain axial symmetry and thus
analytic cuts.
In both cases the enclosing rectangle $R=[-1,3]\times[-1,1]$.
\subsubsection{Two Disks in Rectangle}
\label{sec:twodisks}
Consider two disks of radius $= 1/4$ with centres at $(0,0)$
and $(2,0)$, respectively.
The scaled jumps can be computed analytically, and standard jumps can be verified with the
reference case 2 in Table~\ref{tbl:refCaps}.
Once again, the reciprocal convergence in $p$ is exponential (see Figure~\ref{fig:symmetric2CRc}). 

\subsubsection{Disk and Pacman in Rectangle}
\label{sec:diskpacman}
Next the disk centred at $(2,0)$ is replaced by a disk with one quarter cut, the so-called pacman.
In this case we intentionally break the symmetry between meshes for the two problems.
The geometric refinement at the re-entrant corners is done in slightly different ways.
The reciprocal convergence in $p$ is exponential, but with different rates at
lower and higher values of $p$. Also, the difference in the number of refinement levels
leads to mild consistency error which appears as loss of further convergence and accuracy at high $p$
(see Figure~\ref{fig:symmetric2CRd}). 

Here the jumps must be computed numerically (and tested against the computed capacity).
Jumps are with four decimals:
\[
  d_1 = 3.4808, d_2= 6.3761, d_3=3.4808.
\]
\begin{figure}
  \centering
  \subfloat[Two Disks in Rectangle.]{\includegraphics[width=0.45\textwidth]{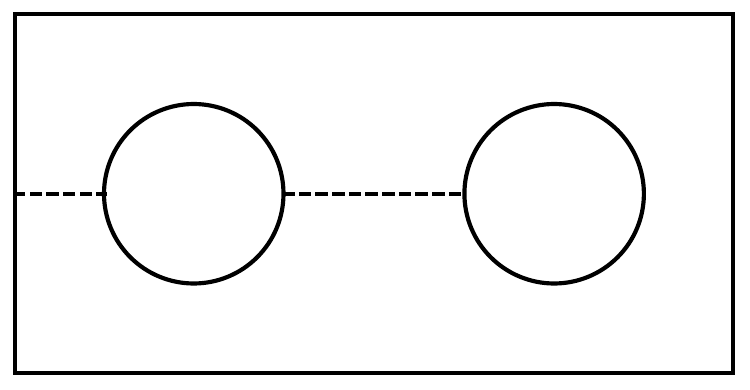}}\quad
  \subfloat[Disk and Pacman in Rectangle.]{\includegraphics[width=0.45\textwidth]{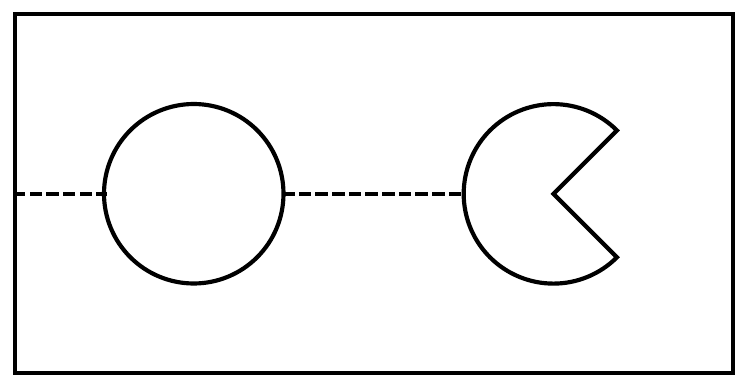}}\\
  \subfloat[Two Disks in Rectangle: Mesh.]{\includegraphics[width=0.45\textwidth]{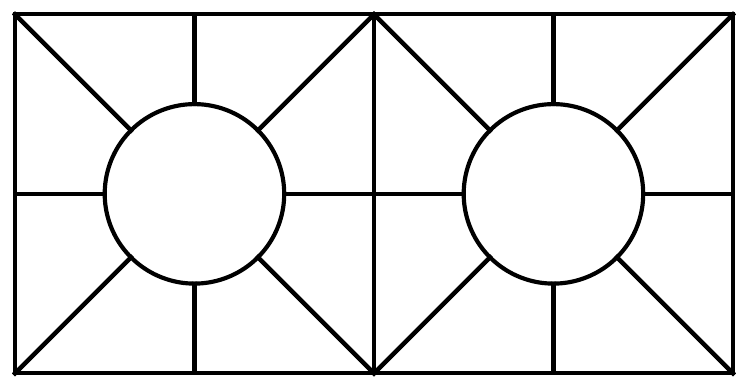}}\quad
  \subfloat[Disk and Pacman in Rectangle: Mesh.]{\includegraphics[width=0.45\textwidth]{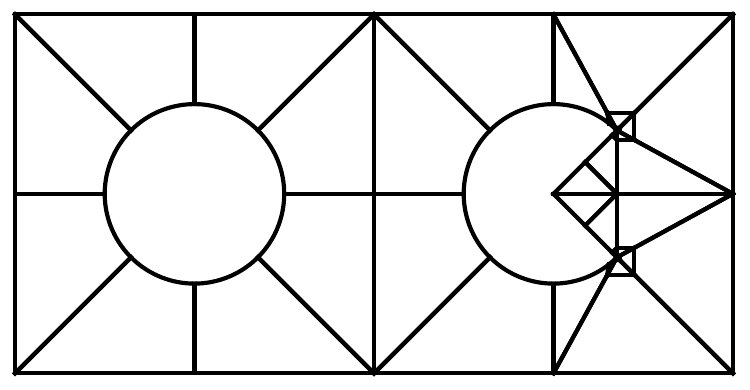}}\\
  \subfloat[Two Disks in Rectangle: Map.]{\includegraphics[width=0.45\textwidth]{CircleCircleMap}}\quad
  \subfloat[Disk and Pacman in Rectangle: Map.]{\includegraphics[width=0.45\textwidth]{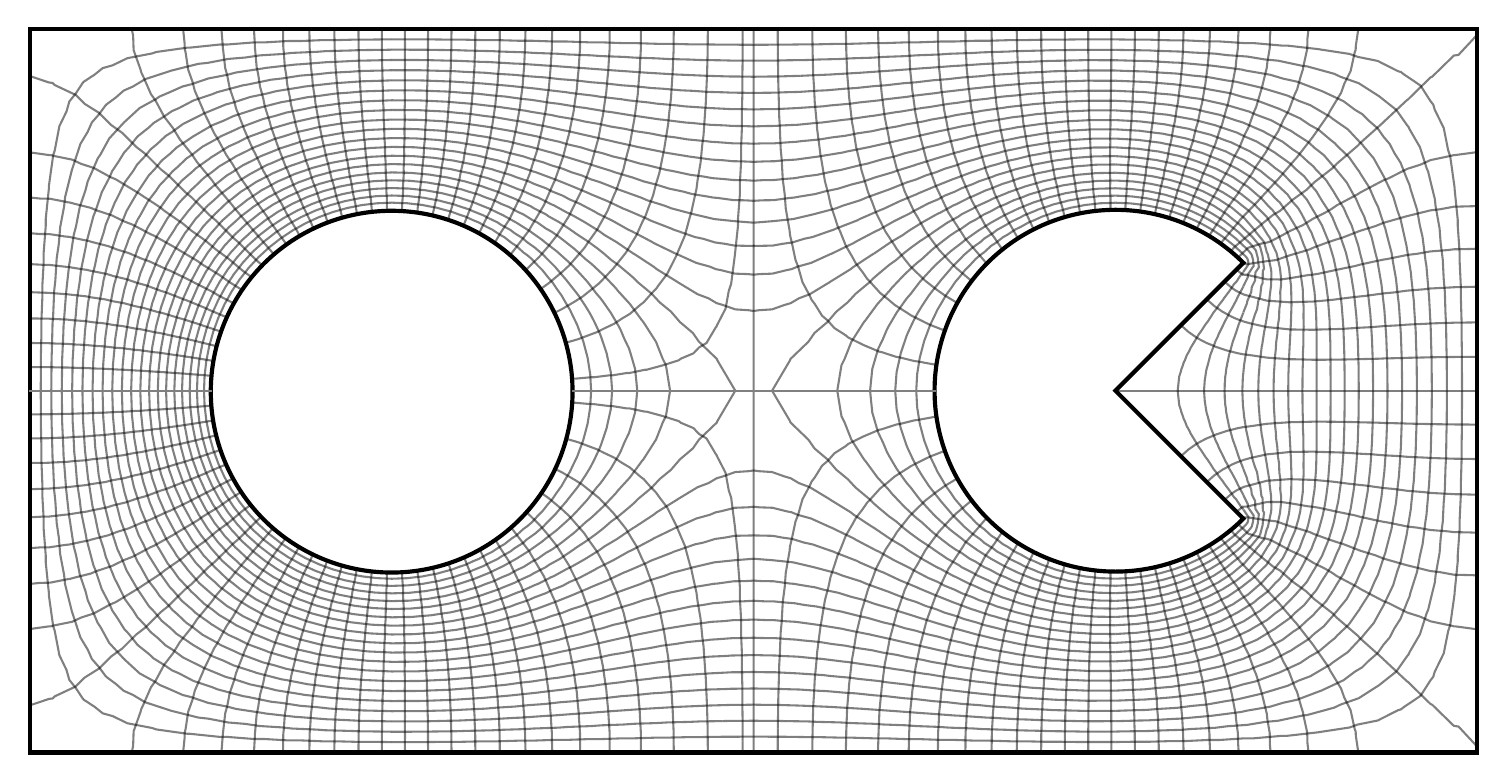}}
  \caption{$R$-type: Axially symmetric cases.}\label{fig:symmetric2}
\end{figure}

\begin{figure}
  \centering
  \subfloat[Cauchy-Riemann: Contour lines of $|\partial u/\partial x|$ and $|\partial v/\partial y|$.]{\includegraphics[width=0.45\textwidth]{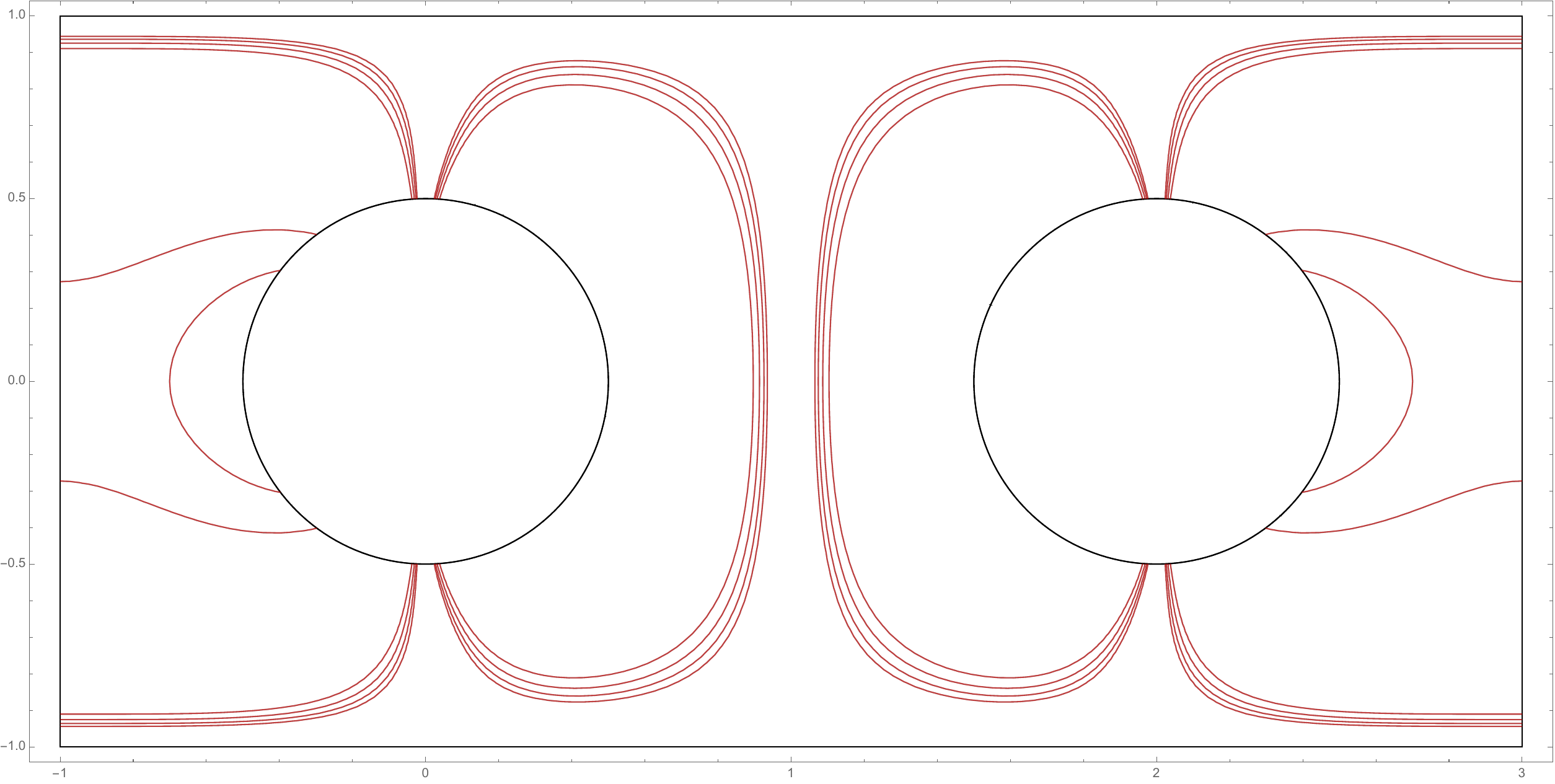}}\quad
  \subfloat[Cauchy-Riemann: Contour lines of $|\partial u/\partial x|$ and $|\partial v/\partial y|$.]{\includegraphics[width=0.45\textwidth]{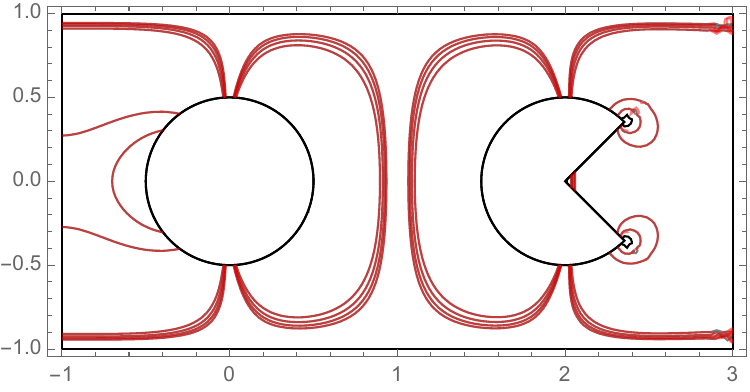}}\\
  \subfloat[Two Disks in Rectangle: Reciprocal identity: Convergence in $p$; log-plot, error vs $p$.]{\label{fig:symmetric2CRc}\includegraphics[width=0.45\textwidth]{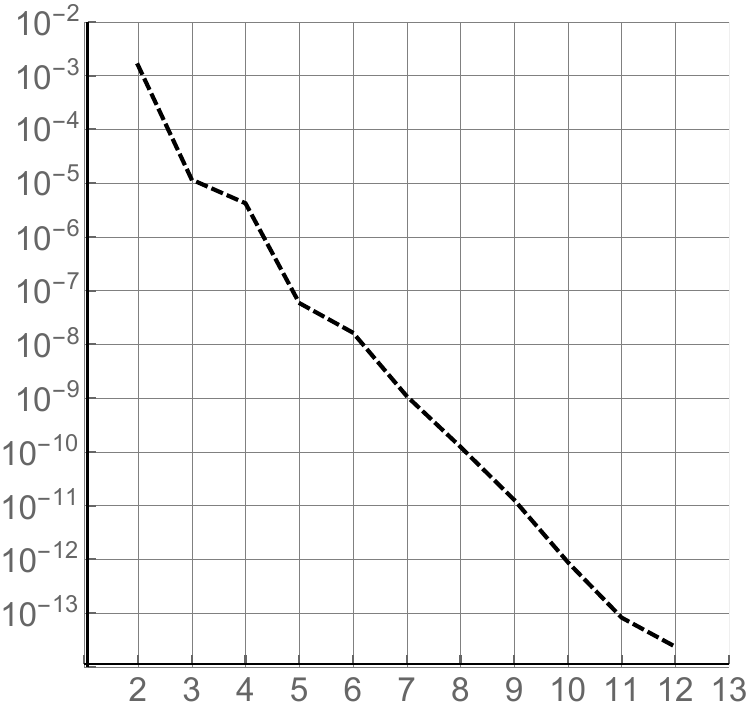}}\quad
  \subfloat[Disk and Pacman in Rectangle: Reciprocal identity: Convergence in $p$; log-plot, error vs $p$.]{\label{fig:symmetric2CRd}\includegraphics[width=0.45\textwidth]{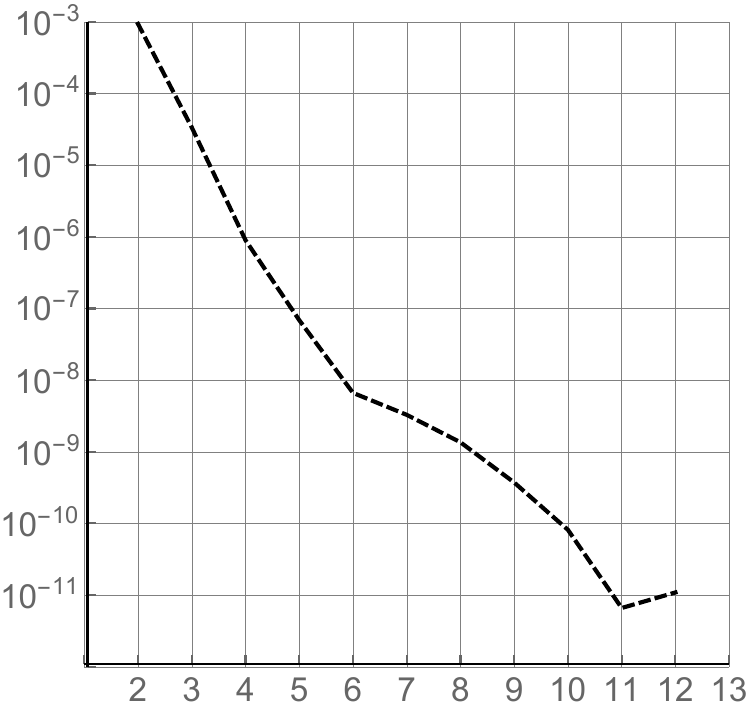}}\\
  \caption{$R$-type: Axially symmetric cases.}\label{fig:symmetric2CR}
\end{figure}

\section{Advanced Examples}

\subsection{Disk and Two Pacmen in Rectangle}
\label{sec:disktwopacmen}
The first example in this section is a general one with enclosing rectangle
$R=[-1,3]\times[-1,4]$ and one disk of radius $=1/4$ at $(0,1)$ and
two pacmen at $(2,0)$ and $(2,3)$. In this case the cuts cannot be determined analytically.
The effect of the cut in relation to the original problem can be seen by
comparing the meshes of Figures~\ref{fig:generala} and \ref{fig:generalb}.
One of the mesh points or nodes is moved to the saddle point and the
corresponding edge has been aligned with the cut.
As outlined before, the question of convergence in the reciprocal relation is
somewhat ambiguous in this case. The smallest error in the given configuration
is $0.00185$. The jumps in the derivatives across the cuts are also clearly
visible in Figure~\ref{fig:generald}. However, in Figure~\ref{fig:generalc}
we see how the contour lines do not cross the cuts except at the saddle points.
Jumps are with four decimals:
\[
  d_1=2.0001,  d_2=4.94015,  d_3=0.09500,
  d_4=5.1651,  d_5=2.1746.
\]
\begin{remark}
 In the Figure~\ref{fig:generald} the contour lines are given without any
 concern to the problem at hand. It would always be possible to, for instance,
 interpolate across the cuts and control the error. Here we have wanted to emphasize
 the effect of the approximate cut.
\end{remark}
\begin{figure}
  \centering
  \subfloat[Domain.]{\label{fig:generala}\includegraphics[width=0.45\textwidth]{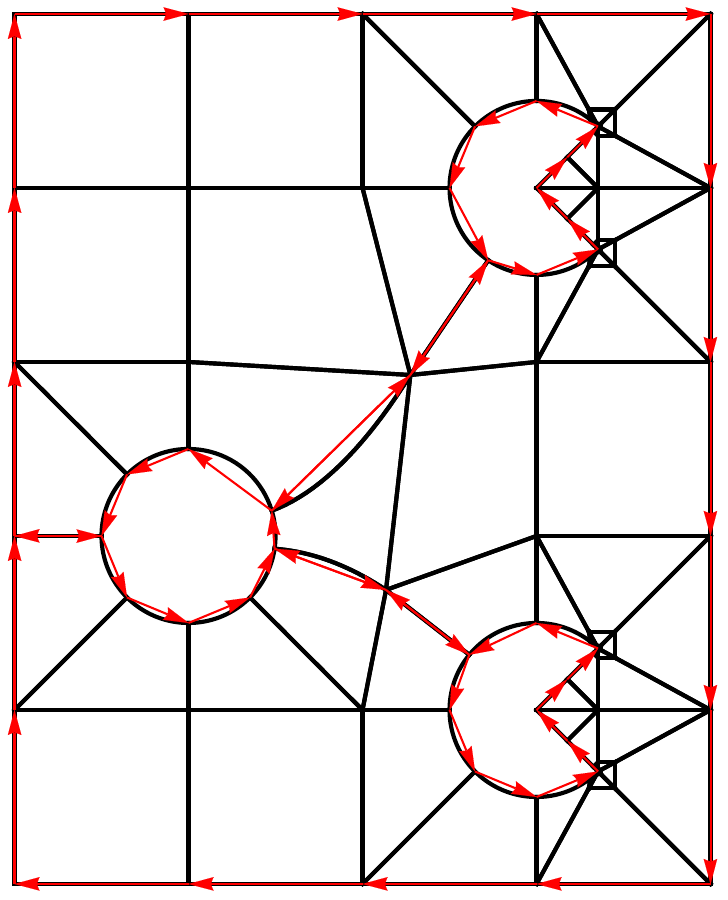}}\quad
  \subfloat[Mesh.]{\label{fig:generalb}\includegraphics[width=0.45\textwidth]{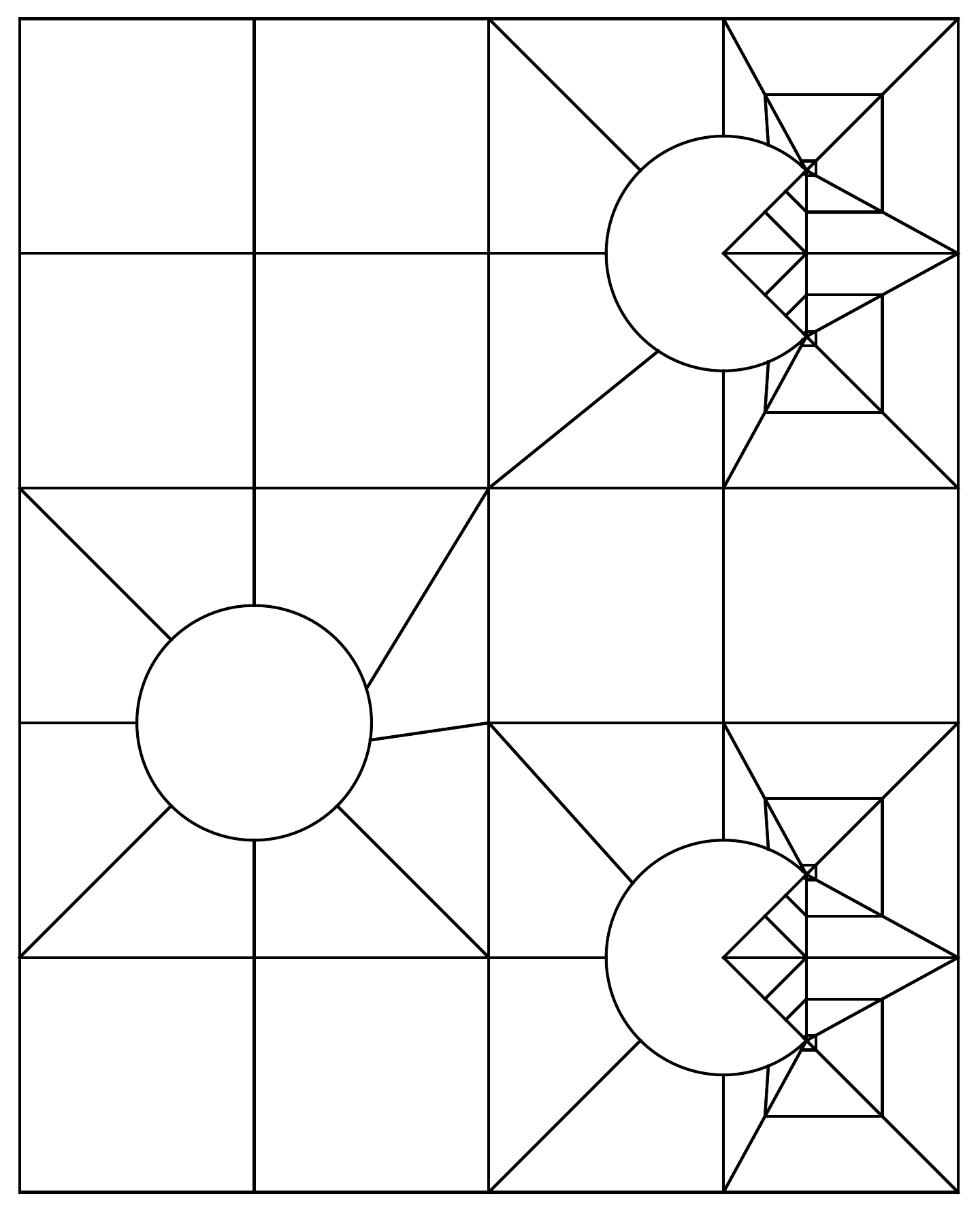}}\\
  \subfloat[Map.]{\label{fig:generalc}\includegraphics[width=0.45\textwidth]{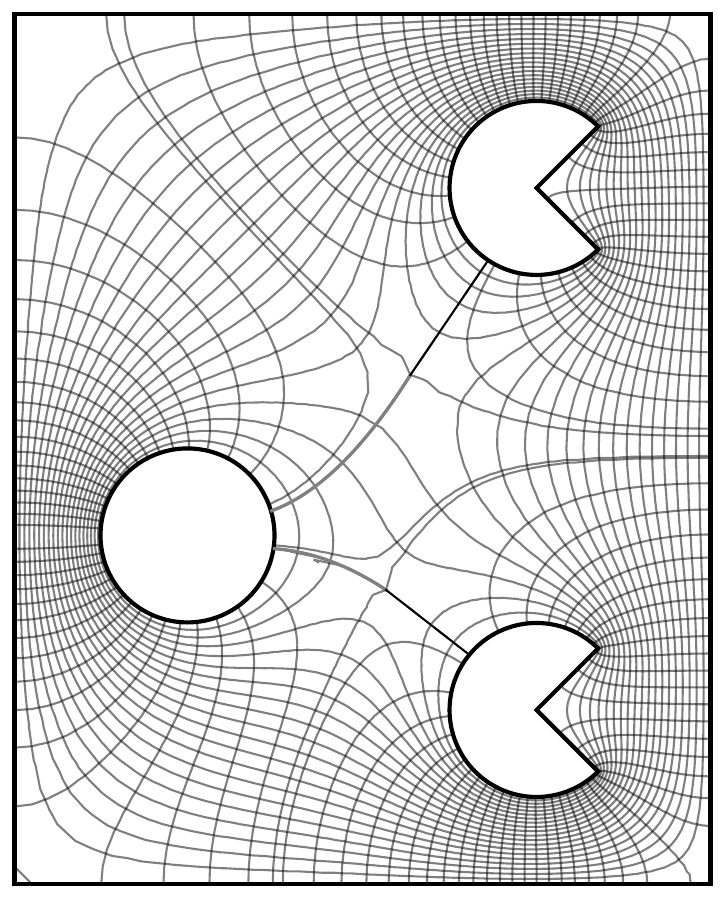}}\quad
  \subfloat[Cauchy-Riemann: Contour lines of $|\partial u/\partial x|$ and $|\partial v/\partial y|$.]{\label{fig:generald}\includegraphics[width=0.45\textwidth]{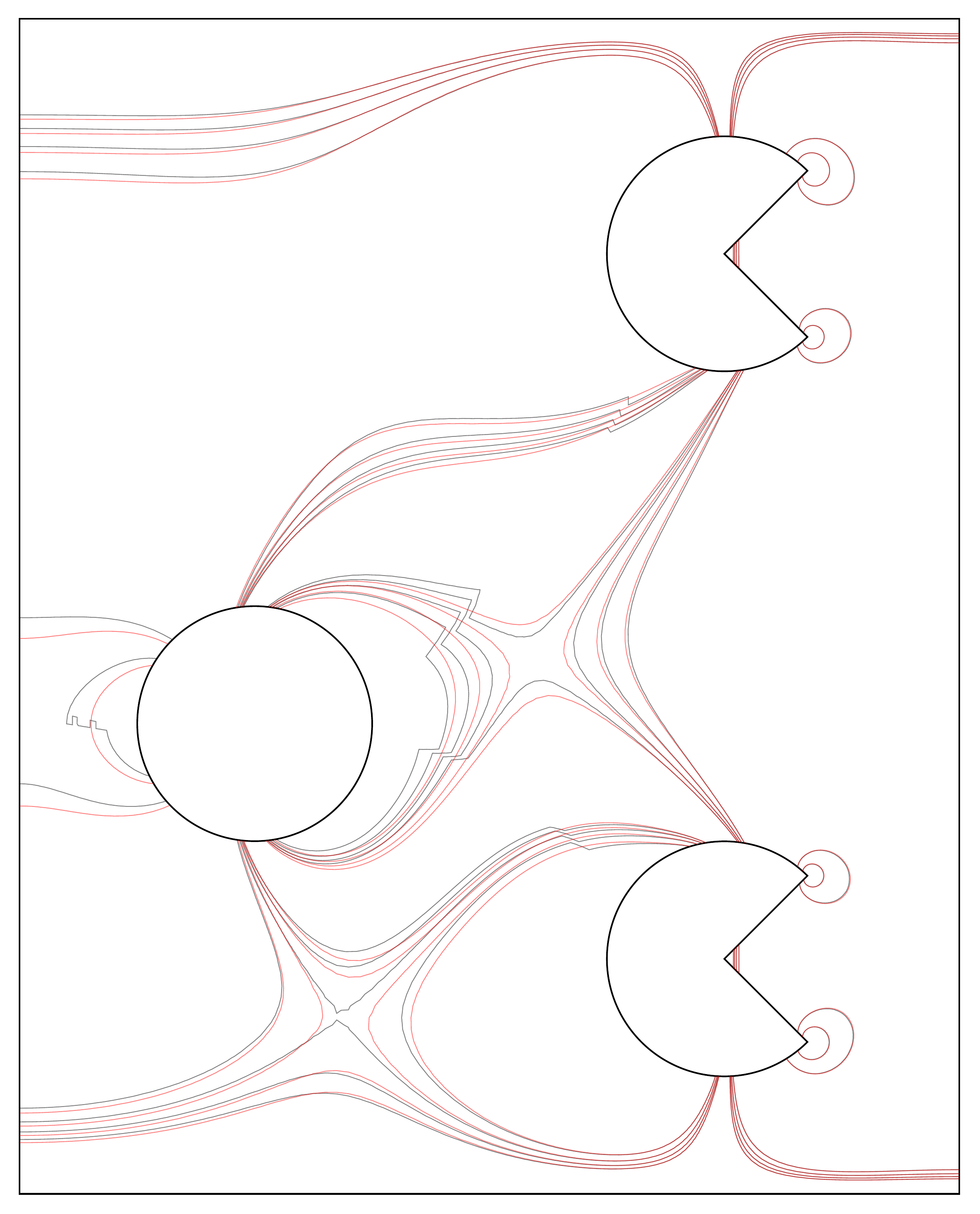}}
  \caption{$R$-type: Disk and Two Pacmen in Rectangle. Contour lines, in d, of derivatives are given without any concern to the problem at hand. Thus jumps across the cut are clearly visible. It is possible to interpolate across the cuts and control the error. Here we have left the jump to emphasize the effect of the approximate cut.}\label{fig:general}
\end{figure}

\subsection{Pacman and Droplet: Domain with Cusp}
As next example we consider an axisymmetric case with a pacman from above and a domain 
bounded by a Bezier curve:
\[
r(t) = \frac{1}{640} \left(45 t^6+75 t^4-525 t^2+469\right) +
\frac{15}{32} t
   \left(t^2-1\right)^2,\ t \in [-1,1].
\]
In \cite{hqr} a ring-domain with the same curve has been considered up to very high accuracy.
Notice that the ``droplet'' is designed so that also the tangents are aligned
for parameter values $t = \pm 1$, thus the opening angle is $2\pi$ requiring strong grading
of the mesh. The resulting maps are shown in Figure~\ref{fig:pacman-droplet}. 
\begin{figure}
  \centering
  \subfloat[$R$-type: Map.]{\label{fig:pacdropR}\includegraphics[width=0.45\textwidth]{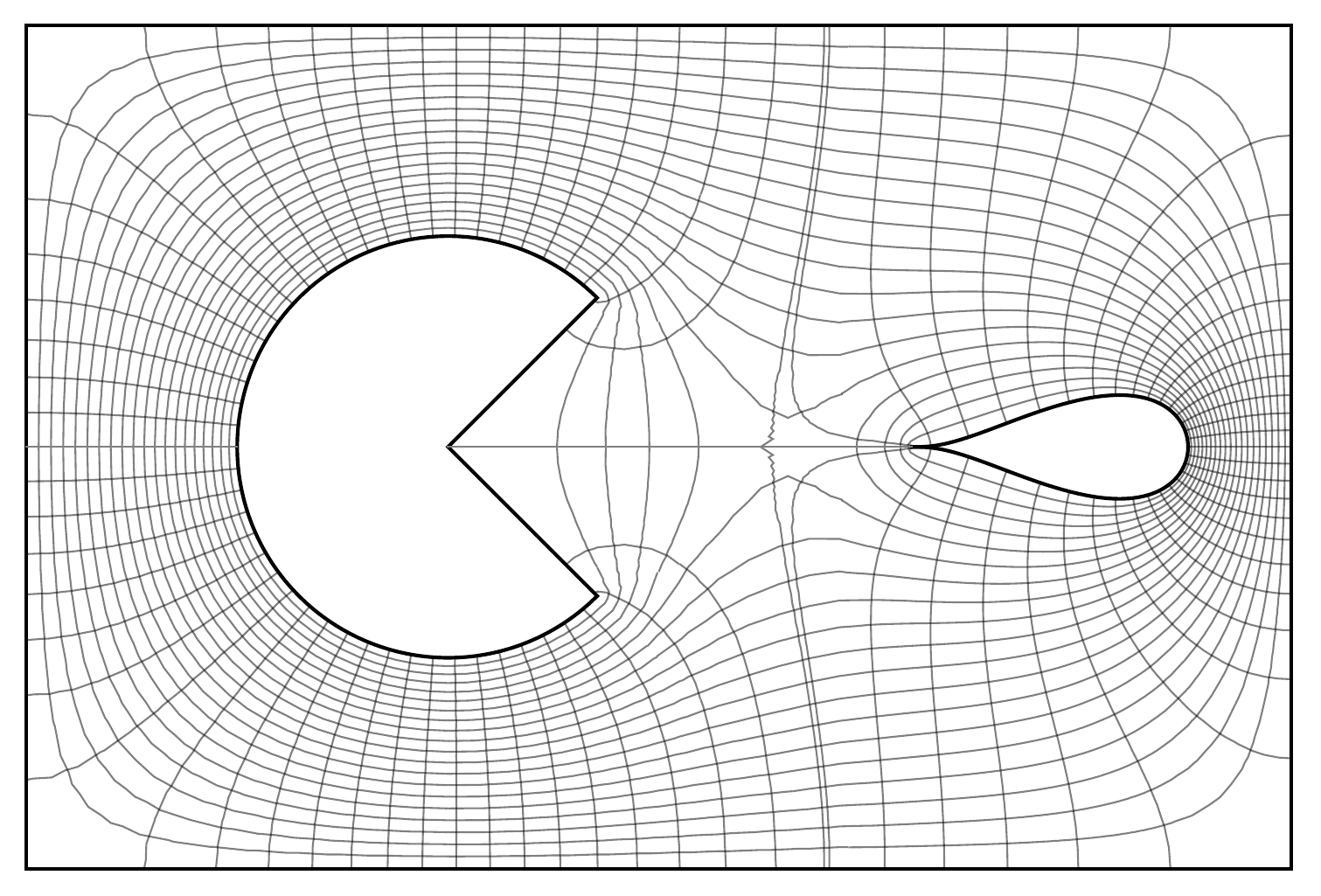}}\quad  
  \subfloat[$Q$-type: Map.]{\label{fig:pacdropQ}\includegraphics[width=0.45\textwidth]{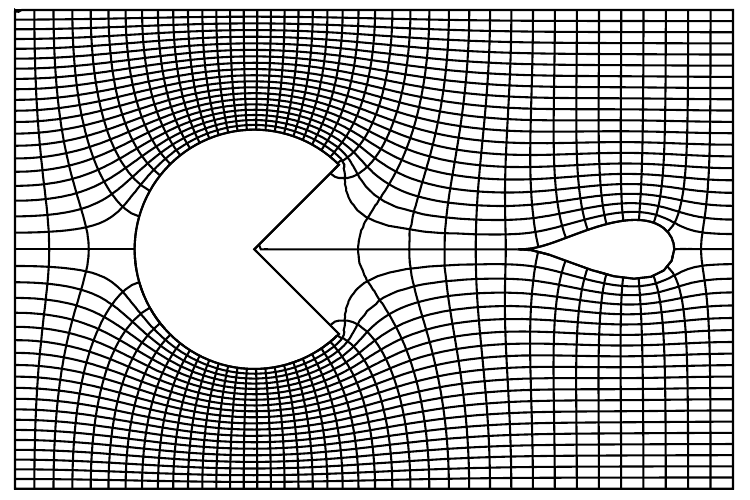}}  
  \caption{Pacman and Droplet.}\label{fig:pacman-droplet}
\end{figure}

\subsection{Perforated Domain: Domain with Uncertainty}
One fascinating and new application for conformal maps is book-keeping
of data in case of domains with uncertainty. 
Consider the perforated domain in Figure~\ref{fig:perforated}.
Let us assume that in plane elasticity we are interested in stresses under fixed loading. If the
manufacturing process leads to imperfections in the locations and sizes of the holes 
the task is to synthesize the stress fields over different realizations.

\begin{figure}
  \centering
  \subfloat[Nominal domain: Map.]{\label{fig:perforatedNominal}\includegraphics[width=0.45\textwidth]{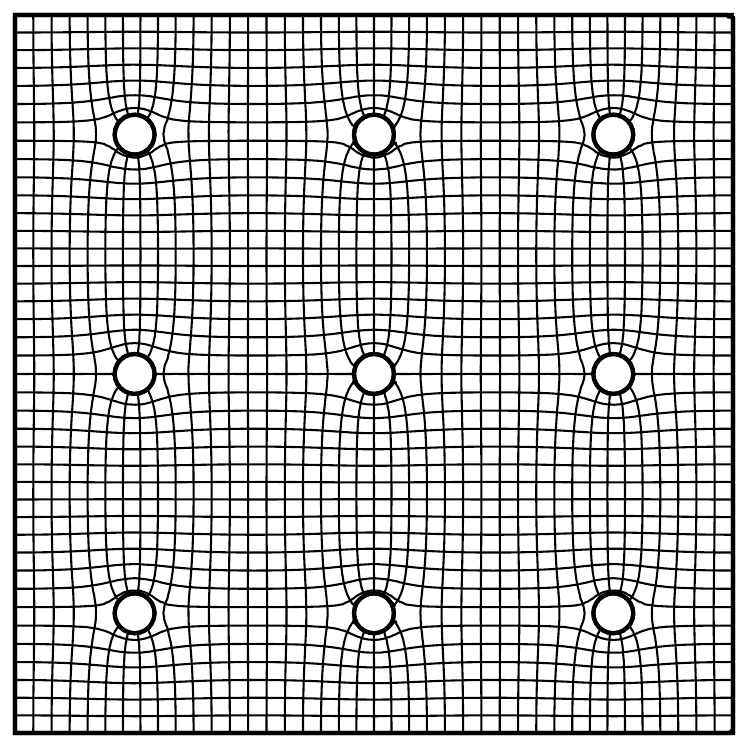}}\quad  
  \subfloat[Perturbed domain: Map.]{\label{fig:perturbedDomain}\includegraphics[width=0.45\textwidth]{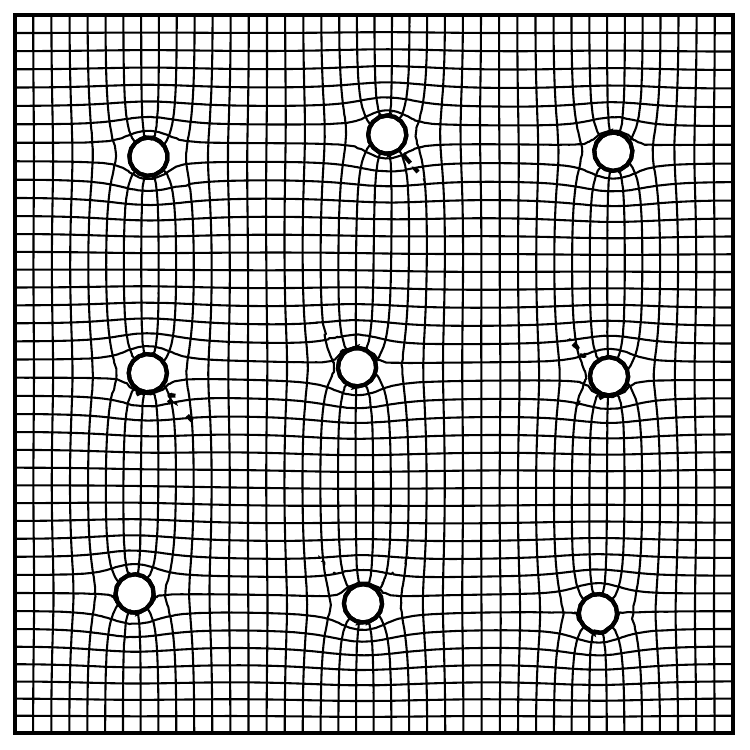}}  
  \caption{$Q$-type: Perforated System.}\label{fig:perforated}
\end{figure}
Let us refer to the domain without imperfections as the nominal domain 
(Figure~\ref{fig:perforatedNominal}).
The key observation is that once the canonical domain of the nominal domain
has been computed, the canonical domains of all realizations can further be mapped onto 
that of the nominal domain. As a result of this for every point of the nominal domain
a distribution of stresses is measured rather than a single value.
This approach has been succesfully applied in an industrial project
where a simply connected domain with uncertain boundary was studied \cite{jl}.

\begin{acknowledgement}
The authors wish to thank prof. R. Michael Porter for his careful reading of an earlier
version of this manuscript. The authors also wish to thank prof. T. DeLillo
and Dr. E. Kropf for help in setting up the example of Section~\ref{sec:delillo}.
\end{acknowledgement}

\end{document}